\begin{document}

\newtheorem{theorem}{Theorem}[section]
\newtheorem{definition}[theorem]{Definition}
\newtheorem{corollary}[theorem]{Corollary}
\newtheorem{lemma}[theorem]{Lemma}
\newtheorem{remark}[theorem]{Remark}
\newtheorem{proposition}[theorem]{Proposition}
\newtheorem{example}[theorem]{Example}
\newtheorem{conjecture}[theorem]{Conjecture}

\numberwithin{equation}{section}

\def\cZ{\mathcal Z}

\def\C{{\mathbb C}}
\def\F{{\mathbb F}}
\def\K{{\mathbb K}}
\def\L{{\mathbb L}}
\def\N{{\mathbb N}}
\def\Q{{\mathbb Q}}
\def\R{{\mathbb R}}
\def\Z{{\mathbb Z}}
\def\E{{\mathbb E}}
\def\T{{\mathbb T}}
\def\P{{\mathbb P}}


\def\eps{\varepsilon}
\def\mand{\qquad\mbox{and}\qquad}
\def\\{\cr}
\def\({\left(}
\def\){\right)}
\def\[{\left[}
\def\]{\right]}
\def\<{\langle}
\def\>{\rangle}
\def\fl#1{\left\lfloor#1\right\rfloor}
\def\rf#1{\left\lceil#1\right\rceil}
\def\le{\leqslant}
\def\ge{\geqslant}
\def\ds{\displaystyle}

\def\xxx{\vskip5pt\hrule\vskip5pt}
\def\yyy{\vskip5pt\hrule\vskip2pt\hrule\vskip5pt}
\def\imhere{ \xxx\centerline{\sc I'm here}\xxx }

\newcommand{\comm}[1]{\marginpar{
\vskip-\baselineskip \raggedright\footnotesize
\itshape\hrule\smallskip#1\par\smallskip\hrule}}


\title{\bf Zeros of Polynomials with Random Coefficients}

\author{
{\sc Igor E.~Pritsker} \\
{Department of Mathematics, Oklahoma State University} \\
{Stilwater, OK 74078 USA} \\
{\tt igor@math.okstate.edu}
\and
{\sc Aaron M.~Yeager} \\
{Department of Mathematics, Oklahoma State University} \\
{Stillwater, OK 74078 USA} \\
{\tt aaron.yeager@math.okstate.edu}}

\maketitle

\begin{abstract}
Zeros of many ensembles of polynomials with random coefficients are asymptotically equidistributed near the unit circumference. We give quantitative estimates for such equidistribution in terms of the expected discrepancy and expected number of roots in various sets. This is done for polynomials with coefficients that may be dependent, and need not have identical distributions. We also study random polynomials spanned by various deterministic bases.
\end{abstract}

\textbf{Keywords:} Polynomials, random coefficients, expected number of zeros, uniform distribution, random orthogonal polynomials.

\section{Introduction}

Zeros of polynomials of the form $P_n(z)=\sum_{k=0}^{n} A_k z^k,$ where $\{A_n\}_{k=0}^n$ are random coefficients,  have been studied by Bloch and P\'olya, Littlewood and Offord, Erd\H{o}s and Offord, Kac, Rice, Hammersley, Shparo and Shur, Arnold, and many other authors. The early history of the subject with numerous references is summarized in the books \cite{BR, Fa}. It is now well known that, under mild conditions on the probability distribution of the coefficients, the majority of zeros of these polynomials is accumulating near the unit circumference, and they are also equidistributed in the angular sense.  Introducing modern terminology, we call a collection of random polynomials $P_n(z)=\sum_{k=0}^n A_k z^k,\ n\in\N,$ the ensemble of \emph{Kac polynomials}.
Let $Z(P_n)=\{Z_1,Z_2,\ldots , Z_n\}$ be the set of complex zeros of a polynomial $P_n$ of degree $n$.  These zeros $\{Z_k\}_{k=1}^n$ give rise to the \emph{zero counting measure}
$$\tau_n=\frac{1}{n}\sum_{k=1}^n \delta_{Z_k},$$
which is a random unit Borel measure in $\C.$ The fact of equidistribution for the zeros of random polynomials can now be expressed via the convergence of $\tau_n$ in the $\text{weak}^*$ topology to the the normalized arclength measure $\mu_{\T}$ on the unit circumference, where $d\mu_{\T}(e^{it}):=dt/(2\pi).$ Namely, we have that $\tau_n \stackrel{*}{\rightarrow} \mu_{\T}$ with probability 1 (abbreviated as a.s. or almost surely). More recent papers on zeros of random polynomials include \cite{HN,IZ,IZa,KZ}.
In particular, Ibragimov and Zaporozhets \cite{IZa} proved that if the coefficients are independent and identically distributed, then the condition $\E[\log^+|A_0|]<\infty$ is necessary and sufficient for $\tau_n \stackrel{*}{\rightarrow} \mu_{\T}$ almost surely. Here, $\E[X]$ denotes the expectation of a random variable $X$.

Our goal is to provide estimates on the expected rate of such convergence. A standard way to study the deviation of $\tau_n$ from $\mu_{\T}$ is to consider the discrepancy of these measures in the annular sectors of the form
$$ A_r(\alpha,\beta)=\{z\in \C :r<|z|<1/r, \ \alpha \leq \text{arg} \ z <\beta \}, \quad 0<r<1.$$
Such estimates were recently provided by Pritsker and Sola \cite{PS} using the largest order statistic $Y_n=\max_{k=0,\ldots, n}|A_k|$. The results of \cite{PS} require the coefficients $\{A_k\}_{k=0}^n$ be independent and identically distributed (iid) complex random variables having absolutely continuous distribution with respect to the area measure. This assumption excluded many important distributions such as discrete ones, in particular. We remove many unnecessary restrictions in this paper, and generalize the results of \cite{PS} in several directions. Section 2 develops essentially the same theory as in \cite{PS} (but uses a different approach) for the case of coefficients that are neither independent nor identically distributed, and whose distributions only satisfy certain uniform bounds for the fractional and logarithmic moments. We also consider random polynomials spanned by general bases in Section 3, which includes random orthogonal polynomials on the unit circle and the unit disk. Section 4 shows how one can handle the discrete random coefficients by methods involving the highest order statistic $Y_n$, augmenting the ideas of \cite{PS}. We further develop the highest order statistic approach to the case of dependent coefficients in Section 5, under the assumption that the coefficients satisfy uniform bounds on the first two moments. All proofs are contained in Section 6.

\section{Expected Number of Zeros of Random Polynomials}

Let $A_k,\ k=0,1,2,\ldots,$ be complex valued random variables that are not necessarily independent nor identically distributed, and let $\|P_n\|_{\infty}=\sup_{\mathbb{T}}|P_n|$. We study the expected deviation of the normalized number of zeros from $\mu_{\T}$ in annular sectors, which is often referred to as discrepancy between those measures.

\begin{theorem} \label{thm2.1}
Suppose that the coefficients of $P_n(z)=\sum_{k=0}^n A_k z^k$ are complex random variables that satisfy:
\begin{enumerate}
\item $\E[|A_k|^t]<\infty,\ k=0,\ldots,n,$ for a fixed $t>0$
\item $\E[\log|A_0|] > -\infty$ and $\E[\log|A_n|] > -\infty.$
\end{enumerate}
Then we have for all large $n\in \N$ that
\begin{align} \label{2.1}
\E\left[\left|\tau_n(A_r(\alpha,\beta))-\frac{\beta-\alpha}{2\pi}\right|\right] \leq C_r \left[\frac{1}{n}\left(\frac{1}{t}\log \sum_{k=0}^n \E[|A_k|^t] - \frac{1}{2}\E[\log|A_0A_n|] \right)\right]^{1/2},
\end{align}
where
$$ C_r := \sqrt{\frac{2\pi}{\mathbf{k}}}+\frac{2}{1-r} \quad\mbox{with}\quad \mathbf{k}:=\sum_{k=0}^{\infty}\frac{(-1)^k}{(2k+1)^2}$$
being Catalan's constant.
\end{theorem}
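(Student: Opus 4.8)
The plan is to separate the argument into a \emph{deterministic} discrepancy estimate for a fixed polynomial and a short \emph{probabilistic} reduction that turns it into the stated bound on the expectation. The deterministic backbone I would use is an Erd\H{o}s--Tur\'an-type inequality adapted to annular sectors: for any $P_n(z)=\sum_{k=0}^n a_k z^k$ with $a_0a_n\neq 0$ and zero counting measure $\tau_n$,
\begin{equation*}
\left|\tau_n(A_r(\alpha,\beta))-\frac{\beta-\alpha}{2\pi}\right| \le C_r\left[\frac{1}{n}\left(\log\|P_n\|_{\infty}-\frac{1}{2}\log|a_0a_n|\right)\right]^{1/2}.
\end{equation*}
The two summands of $C_r$ correspond to the two sources of discrepancy: the \emph{angular} part, in which $\tau_n$ pushed forward to the argument is compared with the uniform distribution and Catalan's constant $\mathbf{k}$ enters as the sharp constant, contributing $\sqrt{2\pi/\mathbf{k}}$; and the \emph{radial} part, in which the mass $\tau_n$ places outside the annulus $r<|z|<1/r$ is controlled, contributing $2/(1-r)$.

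To justify this backbone I would work through potential theory. Writing $\tfrac1n\log|P_n(z)|=\tfrac1n\log|a_n|+\int\log|z-w|\,d\tau_n(w)$ and comparing with the logarithmic potential $\log^+|z|$ of $\mu_{\T}$, one sees via Jensen's formula and the relation $\sum_k\log|Z_k|=\log|a_0/a_n|$ that $\log\|P_n\|_{\infty}-\tfrac12\log|a_0a_n|$ measures how far the boundary growth and the zero locations of $P_n$ deviate from equilibrium. The radial count of zeros in $\{|z|\le r\}$ and $\{|z|\ge 1/r\}$ is then estimated directly from Jensen's formula, producing the factor $2/(1-r)$, while the angular discrepancy follows by bounding the Fourier coefficients of $\tau_n-\mu_{\T}$ and feeding them into the Erd\H{o}s--Tur\'an--Ganelius machinery. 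This deterministic step is the main obstacle: it is where the geometry of the sector, the sharp constants, and the interplay between the growth of $P_n$ and the sizes of $a_0,a_n$ all have to be handled carefully.

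Granting the deterministic inequality, the probabilistic part is short and is where hypotheses (1)--(2) are used. Taking expectations and applying Jensen's inequality to the concave map $x\mapsto x^{1/2}$ gives
\begin{equation*}
\E\left[\left|\tau_n(A_r(\alpha,\beta))-\frac{\beta-\alpha}{2\pi}\right|\right] \le C_r\left[\frac{1}{n}\left(\E[\log\|P_n\|_{\infty}]-\frac{1}{2}\E[\log|A_0A_n|]\right)\right]^{1/2}.
\end{equation*}
Pathwise $\|P_n\|_{\infty}\le\sum_{k=0}^n|A_k|$, and for $0<t\le 1$ the subadditivity of $x\mapsto x^t$ yields $\sum_{k=0}^n|A_k|\le\left(\sum_{k=0}^n|A_k|^t\right)^{1/t}$, so $\log\|P_n\|_{\infty}\le\tfrac1t\log\sum_{k=0}^n|A_k|^t$; for $t>1$ the power-mean inequality costs an extra factor $(n+1)^{1-1/t}$, whose contribution $\tfrac1n(1-\tfrac1t)\log(n+1)$ is negligible and absorbed by the phrase ``for all large $n$.'' Applying Jensen's inequality to the concave logarithm together with linearity of expectation then gives $\E[\log\|P_n\|_{\infty}]\le\tfrac1t\log\sum_{k=0}^n\E[|A_k|^t]$; here hypothesis (1) guarantees this quantity is finite, and hypothesis (2) guarantees $\E[\log|A_0A_n|]>-\infty$ (and in particular $A_0,A_n\neq 0$ almost surely, so $\tau_n$ and all logarithms are well defined). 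Substituting these two bounds into the displayed estimate produces exactly \eqref{2.1}.
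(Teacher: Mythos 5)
Your plan follows the same two-step route as the paper's proof: a deterministic Erd\H{o}s--Tur\'an-type sector discrepancy bound, followed by expectations, Jensen's inequality (for $x\mapsto\sqrt{x}$ and for the logarithm), the pathwise bound $\|P_n\|_{\infty}\le\sum_{k=0}^n|A_k|$, and subadditivity of $x\mapsto x^t$; this probabilistic reduction is precisely the paper's Lemma~\ref{lem6.2}. Your deterministic backbone is the paper's Lemma~\ref{lem6.1}, which the paper does not prove but quotes from Proposition 2.1 of \cite{PS}; your potential-theoretic sketch (Erd\H{o}s--Tur\'an for the angular part, Jensen's formula for the radial part) is consistent with how that result is actually proved, though it remains a sketch. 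One technical point: that machinery yields the two-term bound of Lemma~\ref{lem6.1}, whose second term is $\frac{2}{n(1-r)}\log\bigl(\|P_n\|_{\infty}/\sqrt{|a_0a_n|}\bigr)$ with no square root; your combined form with a single $C_r[\,\cdot\,]^{1/2}$ needs an extra word --- either combine the two terms after taking expectations, for all large $n$, as the paper does, or observe that the discrepancy is trivially at most $1$ while your right-hand side exceeds $C_r>1$ whenever $\log\bigl(\|P_n\|_{\infty}/\sqrt{|a_0a_n|}\bigr)>n$.

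The one genuine gap is your treatment of $t>1$. The extra term $\frac{1}{n}\bigl(1-\frac{1}{t}\bigr)\log(n+1)$ produced by the power-mean inequality enters the bracket of \eqref{2.1} additively, and it is not negligible there: for coefficients with $\E[|A_k|^t]=1$ and $\E[\log|A_0A_n|]=0$ (unimodular coefficients, say), the bracket in \eqref{2.1} equals $\frac{1}{tn}\log(n+1)$, so your extra term is $(t-1)$ times the entire main term, and what you actually prove is \eqref{2.1} with $C_r$ replaced by $\sqrt{t}\,C_r$. Since \eqref{2.1} is a quantitative inequality with an explicit constant, the phrase ``for all large $n$'' cannot absorb an error of the same order as the bound itself. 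You are in good company: the paper's own Lemma~\ref{lem6.2} is stated for all $t>0$, but its proof rests on the inequality $\sum_i x_i^t\ge\bigl(\sum_i x_i\bigr)^t$, which holds only for $t\le 1$. So both your argument and the paper's genuinely establish the theorem for $t\in(0,1]$; the difference is that you explicitly attempted the extension to $t>1$, and that step fails.
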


Introducing uniform bounds, we obtain the rates of convergence for the expected discrepancy as $n\to\infty.$
\begin{corollary} \label{cor2.2}
Let $P_n(z)=\sum_{k=0}^n A_{k,n} z^k,\ n\in\N,$ be a sequence of random polynomials. If
$$M := \sup\{\E[|A_{k,n}|^t]\ \vert \ k=0,\ldots,n,\ n\in\N\} < \infty$$
and
$$L :=  \inf\{\E[\log|A_{k,n}|]\ \vert \ k=0\,\&\,n,\ n\in\N\} > - \infty,$$
then
\begin{align*}
\E\left[\left|\tau_n(A_r(\alpha,\beta))-\frac{\beta-\alpha}{2\pi}\right|\right] \leq C_r \left[\frac{1}{n}\left(\frac{\log (n+1)+\log M}{t} - L \right)\right]^{1/2} = O\left(\sqrt{\frac{\log{n}}{n}}\right)
\end{align*}
as $n\to\infty.$
\end{corollary}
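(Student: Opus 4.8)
The plan is to apply Theorem~\ref{thm2.1} to each polynomial $P_n$ in the sequence, taking its coefficients to be the array $A_{0,n},\ldots,A_{n,n}$, and then to replace every quantity on the right-hand side of \eqref{2.1} by its uniform bound coming from $M$ and $L$. Before doing so, I would check that the two hypotheses of the theorem hold for each fixed $n$. Condition (1) is immediate: finiteness of $M$ forces $\E[|A_{k,n}|^t]\le M<\infty$ for every admissible $k$ and $n$. Condition (2) follows the same way from $L>-\infty$, since the infimum defining $L$ is taken over exactly the indices $k=0$ and $k=n$, so $\E[\log|A_{0,n}|]\ge L>-\infty$ and $\E[\log|A_{n,n}|]\ge L>-\infty$.

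Next I would bound the two terms inside the bracket of \eqref{2.1}. For the moment sum, $\E[|A_{k,n}|^t]\le M$ gives $\sum_{k=0}^n\E[|A_{k,n}|^t]\le (n+1)M$, whence $\frac1t\log\sum_{k=0}^n\E[|A_{k,n}|^t]\le (\log(n+1)+\log M)/t$. For the logarithmic term I would use linearity of expectation to split $\E[\log|A_{0,n}A_{n,n}|]=\E[\log|A_{0,n}|]+\E[\log|A_{n,n}|]\ge 2L$, so that $-\frac12\E[\log|A_{0,n}A_{n,n}|]\le -L$. Adding these two estimates bounds the bracketed quantity by $\frac1n\(\frac{\log(n+1)+\log M}{t}-L\)$. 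Since the square-root function is monotone increasing and the bracket is nonnegative for all large $n$ (the original bracket in the theorem must already be nonnegative, as it bounds a nonnegative expectation), applying $\sqrt{\cdot}$ preserves the inequality and multiplying by $C_r$ yields the displayed bound.

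Finally, the asymptotic statement follows by inspection. With $M$ and $L$ fixed, the bracketed quantity equals $\frac1n\(\frac{\log(n+1)}{t}+c\)$ for the constant $c:=\frac{\log M}{t}-L$, which is $O(\log n/n)$; taking square roots gives the claimed $O(\sqrt{\log n/n})$ rate as $n\to\infty$.

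I do not expect a genuine obstacle here, as the corollary is a direct specialization of Theorem~\ref{thm2.1}. The only point meriting care is conceptual rather than computational: Theorem~\ref{thm2.1} is stated for a single polynomial, whereas the corollary concerns a whole sequence drawn from a triangular array $\{A_{k,n}\}$, so one must confirm that the hypotheses hold \emph{uniformly} across $n$—which is precisely what the finiteness of $M$ and $L$ guarantees, allowing the per-$n$ conclusion to be aggregated into the stated asymptotic rate.
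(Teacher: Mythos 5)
Your proposal is correct and follows essentially the same route as the paper, which simply states that the corollary "follows immediately upon using the uniform bounds $M$ and $L$ in estimate \eqref{2.1}." You have merely written out the substitutions $\sum_{k=0}^n\E[|A_{k,n}|^t]\le (n+1)M$ and $\E[\log|A_{0,n}A_{n,n}|]\ge 2L$ explicitly, together with the hypothesis verification, which is exactly what the paper's one-line proof leaves implicit.
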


The arguments of \cite{PS} now give quantitative results about the expected number of zeros of random polynomials in various sets. We first consider sets separated from $\T.$
\begin{proposition} \label{prop2.3}
Let $E\subset \C$ be a compact set such that $E\cap \T=\emptyset$, and set $d:=\text{dist}(E,\T)$.  If $P_n$ is as in Theorem \ref{thm2.1}, then the expected number of its zeros in E satisfies
$$\E[n\tau_n(E)]\leq \frac{d+1}{d}\left( \frac{2}{t}\log \left(\sum_{k=0}^n \E[|A_k|^t]\right) - \E[\log|A_0A_n|] \right).$$
\end{proposition}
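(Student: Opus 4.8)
The plan is to bound the expected number of zeros in $E$ by relating it to a logarithmic-potential / Jensen-type inequality, exploiting the fact that $E$ is at positive distance $d$ from the unit circle. The key observation is that for any zero $Z_k$ of $P_n$ lying in $E$, we have $|Z_k|$ bounded away from $1$: either $|Z_k| \le 1-d$ or $|Z_k| \ge 1+d$ (more precisely, $\bigl||Z_k|-1\bigr| \ge d$, though one must be slightly careful since points of $E$ can be off the real axis — what matters is that each such zero is at distance at least $d$ from $\T$, hence $\log\bigl(1/|Z_k|\bigr)$ or $\log|Z_k|$ is bounded below in absolute value by a quantity comparable to $d$).

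First I would set up the factorization $P_n(z) = A_n \prod_{k=1}^n (z - Z_k)$ and take logarithms to produce the standard potential-theoretic identity, namely
\begin{align*}
\int_{\T} \log|P_n|\, d\mu_{\T} = \log|A_n| + \sum_{k=1}^n \log^+|Z_k|,
\end{align*}
which follows from Jensen's formula applied at the origin (the term $\log^+|Z_k| = \max(\log|Z_k|,0)$ collects the contributions of zeros outside the unit disk, while zeros inside contribute nothing to the boundary integral beyond the leading coefficient). A symmetric identity obtained by applying the same reasoning to the reciprocal polynomial, or by combining Jensen at $0$ and at $\infty$, controls the zeros \emph{inside} the disk via $\log^-|Z_k| = \min(\log|Z_k|,0)$ against the coefficient $\log|A_0|$. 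Adding these, each zero in $E$ contributes a term of size at least $\log(1+d)$ or $-\log(1-d)$, both of which are bounded below by a constant multiple of $d/(d+1)$; this is exactly where the factor $(d+1)/d$ in the statement will come from.

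Next I would bound $\int_{\T}\log|P_n|\,d\mu_{\T}$ from above. Since $\log$ is concave, Jensen's inequality gives $\int_{\T}\log|P_n|\,d\mu_{\T} \le \log\bigl(\int_{\T}|P_n|\,d\mu_{\T}\bigr) \le \log\|P_n\|_\infty$, and then one estimates $\|P_n\|_\infty \le \sum_{k=0}^n |A_k|$ on $\T$, or more efficiently passes through the $t$-th moment: by the power-mean / subadditivity estimate one has $\|P_n\|_\infty^t \lesssim \sum_{k=0}^n |A_k|^t$ up to a constant absorbed by taking $n$ large, yielding $\log\|P_n\|_\infty \le \tfrac{1}{t}\log\sum_{k=0}^n |A_k|^t$ after applying $\log$ and concavity once more. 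Taking expectations and using Jensen's inequality $\E[\log(\cdot)] \le \log\E[\cdot]$ to move the expectation inside the logarithm produces the $\tfrac{2}{t}\log\sum \E[|A_k|^t]$ term, while the coefficient contributions $\log|A_0|,\log|A_n|$ become $\E[\log|A_0 A_n|]$.

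The main obstacle I anticipate is bookkeeping the geometric constant cleanly: one must verify that every zero in $E$, regardless of whether it lies inside or outside $\T$, is charged an amount bounded below by a \emph{uniform} multiple of $d/(d+1)$, so that dividing through recovers precisely the factor $(d+1)/d$ rather than a worse constant. This requires handling the two cases $|Z_k|<1$ and $|Z_k|>1$ together and checking that the elementary inequalities $\log(1+d) \ge d/(d+1)$ and $-\log(1-d) \ge d$ interact correctly with the combined Jensen identity. A secondary technical point is confirming that the factor of $2$ (rather than $1$) in front of the logarithmic moment term is the honest output of summing the two one-sided Jensen formulas and then bounding both halves by the same upper estimate for $\log\|P_n\|_\infty$; this doubling is the price of controlling zeros on both sides of $\T$ simultaneously.
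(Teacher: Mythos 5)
Your proposal is correct and is essentially the paper's own argument: the combined Jensen/Mahler-measure identity you describe, in which every zero lying in $E$ is charged at least $d/(d+1)$ of the quantity $2m(P_n)-\log|A_0A_n|$, is precisely the inequality (5.3) of Pritsker--Sola that the paper cites as its starting point (the paper chooses $r=1/(d+1)$, so that $1-r=d/(d+1)$ and $E\subset\C\setminus A_r(0,2\pi)$), and your remaining steps --- bounding $m(P_n)\le\log\|P_n\|_{\infty}\le\log\sum_{k=0}^n|A_k|$ and then passing to $\frac{1}{t}\log\sum_{k=0}^n\E[|A_k|^t]$ via subadditivity and Jensen's inequality --- coincide with the paper's use of Lemma 6.2. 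The bookkeeping you flagged as the main obstacle does close with constant exactly $1$, since $\log(1+d)\ge d/(d+1)$ for all $d>0$ and $-\log(1-d)\ge d\ge d/(d+1)$ for $0<d<1$, so the factor $(d+1)/d$ and the doubling $2/t$ come out exactly as you predicted; the only real difference is that you re-derive the zero-counting inequality from Jensen's formula, making the proof self-contained where the paper relies on a citation.
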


Just as in \cite{PS}, the following proposition gives a bound on the number of zeros in sets that have non-tangential contact with $\T$.
\begin{proposition} \label{prop2.4}
If $E$ is a polygon inscribed in $\T$, and the sequence $\{P_n\}_{n=1}^{\infty}$ is as in Corollary \ref{cor2.2}, then the expected number of zeros of $P_n$ in $E$ satisfies
$$\E[n\tau_n(E)]=O\left(\sqrt{n\log n}\right)  \ \ \text{as} \ \   n\rightarrow \infty.$$
\end{proposition}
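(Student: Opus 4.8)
The plan is to bound the number of zeros of $P_n$ lying in an inscribed polygon $E$ by relating it to the expected discrepancy already controlled by Corollary \ref{cor2.2}. First I would observe that since $E$ is inscribed in $\T$, each of its finitely many edges is a chord of the unit circle, and the polygon meets $\T$ only at its vertices (non-tangentially). The key geometric idea is that a polygon with non-tangential contact can be covered by a controlled union of annular sectors: one can trap $E$ inside a region of the form $A_r(\alpha,\beta)$ with $r = r(n)$ chosen to tend to $1$ at an appropriate rate, together with a bounded number of such sectors accounting for the angular spread near each vertex. Because $E$ stays a definite distance inside $\T$ except in shrinking neighborhoods of the vertices, choosing $r = 1 - c/\sqrt{n\log n}$ (or a comparable rate) should make the sector just large enough to contain $E$ while keeping $C_r = \sqrt{2\pi/\mathbf{k}} + 2/(1-r)$ growing only like $\sqrt{n/\log n}$.

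The second step is to convert the discrepancy estimate into a count. Writing $\E[n\tau_n(E)] = n\,\E[\tau_n(E)]$, I would split $\E[\tau_n(E)]$ as the normalized arclength $\mu_{\T}$-measure of the covering sectors plus the expected discrepancy. The covering sectors have total angular measure $O(1)$ from the fixed number of edges, but since $r \to 1$ the $\mu_{\T}$-mass contribution attributable to the thin annular band is itself $O(1-r)$; more carefully, a polygon inscribed in $\T$ encloses zero arclength on $\T$ (its interior meets $\T$ in a finite point set), so the deterministic main term $(\beta-\alpha)/(2\pi)$ summed over the covering is $O(1-r)=O(1/\sqrt{n\log n})$ per edge. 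Multiplying by $n$ this deterministic part contributes $O(\sqrt{n/\log n})$, which is dominated by the target bound. The stochastic part is handled by applying Corollary \ref{cor2.2} to each covering sector: each term is bounded by $C_r [\,(1/n)((\log(n+1)+\log M)/t - L)\,]^{1/2}$, and with $1-r \asymp 1/\sqrt{n\log n}$ this gives $C_r = O(\sqrt{n/\log n})$ times a factor of order $\sqrt{(\log n)/n}$, producing $O(1)$ per sector for $\E[\tau_n]$ and hence $O(n)$ — so I must be more delicate with the rate of $r$.

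The right balance, which is where the main obstacle lies, is to optimize the choice of $r=r(n)$ so that the product of $C_r \asymp 1/(1-r)$ with the discrepancy rate $\sqrt{(\log n)/n}$, after multiplication by $n$, yields exactly $O(\sqrt{n\log n})$. Setting $1-r$ proportional to a constant and tracking the two competing contributions (the deterministic $\mu_{\T}$-term scaling like $n(1-r)$ and the stochastic term scaling like $n \cdot \frac{1}{1-r}\sqrt{(\log n)/n} = \frac{\sqrt{n\log n}}{1-r}$) shows that the stochastic term already delivers $O(\sqrt{n\log n})$ when $1-r$ is bounded below by a fixed constant, while the requirement that the sectors actually contain the shrinking vertex neighborhoods forces $r$ toward $1$ only logarithmically. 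I expect the delicate point to be verifying that a fixed $r<1$ (independent of $n$) genuinely suffices to cover the polygon up to the vertices, using that each vertex subtends a cone opening strictly inside $\T$; the non-tangential contact is exactly what guarantees that a neighborhood of each vertex inside $E$ is contained in a fixed annular sector $A_r(\alpha_j,\beta_j)$ with $r$ bounded away from $1$. Once this covering is established with finitely many fixed sectors, summing the per-sector bounds from Corollary \ref{cor2.2} and multiplying by $n$ gives the claimed $O(\sqrt{n\log n})$ directly, following the scheme of \cite{PS}.
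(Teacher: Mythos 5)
There is a genuine gap, and it is exactly at the point you flagged yourself and then resolved incorrectly. Your final configuration --- finitely many sectors $A_r(\alpha_j,\beta_j)$ with \emph{both} $r$ and the angles fixed independently of $n$ --- cannot yield the claimed bound, because Corollary \ref{cor2.2} controls only the \emph{discrepancy}, not the count. For a fixed sector,
$\E[n\tau_n(A_r(\alpha_j,\beta_j))] \le n\,\frac{\beta_j-\alpha_j}{2\pi} + O(\sqrt{n\log n})$,
and the deterministic term $n(\beta_j-\alpha_j)/(2\pi)$ is of order $n$: once $r$ is fixed, any sector containing the portion of $E$ near a vertex inside the annulus $\{r<|z|<1/r\}$ must have angular width bounded below by a positive constant depending only on $r$ and the polygon. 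Your attempted escape --- ``a polygon inscribed in $\T$ encloses zero arclength on $\T$'' --- confuses the arclength of $E\cap\T$ (indeed zero) with the angular measure of the \emph{covering sectors}, which is what actually enters the estimate. So summing per-sector bounds with fixed sectors gives $O(n)$, not $O(\sqrt{n\log n})$. A second, related omission: annular sectors only cover $\{r<|z|<1/r\}$, so with $r$ bounded away from $1$ the bulk $E\cap\{|z|\le r\}$ of the polygon is never covered, and its zeros are never counted; this is why the paper's proof invokes Proposition \ref{prop2.3} alongside Corollary \ref{cor2.2}, while your argument never uses Proposition \ref{prop2.3} at all.

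The correct resolution (the scheme of Proposition 3.5 in \cite{PS}, which the paper follows) is to decouple the radial and angular parameters rather than tie both to a single $r(n)$. Keep $r<1$ \emph{fixed}, so that $C_r=O(1)$, but give each vertex sector a \emph{shrinking} angular half-width $\delta_n$: cover the part of $E$ near each vertex $e^{i\theta_j}$ by $A_r(\theta_j-C\delta_n,\theta_j+C\delta_n)$. Then the expected number of zeros in these $m$ sectors is at most $Cmn\delta_n/\pi + O(\sqrt{n\log n})$, the error term being $n\cdot C_r\cdot O(\sqrt{(\log n)/n})$ with $C_r$ bounded. By non-tangential contact, every point of $E$ whose argument is at angular distance at least $C\delta_n$ from all vertices satisfies $|z|\le 1-c\delta_n$, so the remainder of $E$ is a compact set at distance at least $c\delta_n$ from $\T$; Proposition \ref{prop2.3}, whose bound depends on this set only through $d$, combined with the uniform bounds of Corollary \ref{cor2.2} (which make the bracket $O(\log n)$), gives $O((\log n)/\delta_n)$ expected zeros there. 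Balancing $n\delta_n$ against $(\log n)/\delta_n$ forces $\delta_n \asymp \sqrt{(\log n)/n}$, and then all three contributions are $O(\sqrt{n\log n})$. Note that the main term $n\delta_n\asymp\sqrt{n\log n}$ enters as a deterministic angular-measure term, not through the discrepancy; this is precisely the term your write-up loses track of.
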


Finally, if an open set insects $\T$, then it must carry a positive fraction of zeros according to the normalized arclength measure on $\T$. This is illustrated below for the disks $D_r(w)=\{z\in \C:|z-w|<r\}$, $w\in \T$.
\begin{proposition} \label{prop2.5}
If $w\in \T$ and $r<2$, and the sequence $\{P_n\}_{n=1}^{\infty}$ is as in Corollary \ref{cor2.2}, then the expected number of zeros of $P_n$ in $D_r(w)$ satisfies
$$\E[n\tau(D_r(w))]=\frac{2\arcsin (r/2)}{\pi} \ n+O\left(\sqrt{n\log n}\right) \ \ \text{as} \ \ n\rightarrow \infty.$$
\end{proposition}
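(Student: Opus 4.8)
The plan is to count zeros in $D_r(w)$ by comparing against the arclength measure $\mu_\T$ of the same disk, then control the error using the discrepancy bound already established. First I would compute $\mu_\T(D_r(w))$ exactly. Since $w\in\T$ and $r<2$, the disk $D_r(w)$ intersects $\T$ in an arc; a short geometric computation (the chord of length $r$ subtends a central angle $2\arcsin(r/2)$) shows that $\mu_\T(D_r(w)) = \frac{2\arcsin(r/2)}{\pi}$. This gives the main term $\frac{2\arcsin(r/2)}{\pi}\,n$ in the statement.

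The error analysis is where the discrepancy estimate enters. I would split the region near the boundary arc into two annular sectors plus controllable remainders. The idea is to approximate $D_r(w)$ by an annular sector $A_\rho(\alpha,\beta)$ whose angular opening $\beta-\alpha$ matches the arc cut out by the disk, and whose radial width $(\rho, 1/\rho)$ is chosen so that the symmetric difference between $D_r(w)$ and $A_\rho(\alpha,\beta)$ lies close to $\T$ and contributes only lower-order error. Concretely, I would write
\begin{align*}
n\tau_n(D_r(w)) - \frac{2\arcsin(r/2)}{\pi}\,n = n\big(\tau_n(D_r(w)) - \mu_\T(D_r(w))\big),
\end{align*}
and then bound the right-hand side by inserting the sector: the difference $\tau_n(D_r(w)) - \tau_n(A_\rho(\alpha,\beta))$ is controlled by the number of zeros in the thin symmetric-difference region, while $\tau_n(A_\rho(\alpha,\beta)) - \mu_\T(A_\rho(\alpha,\beta))$ is exactly the quantity bounded in expectation by Corollary \ref{cor2.2}. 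Taking expectations throughout and applying the $O(\sqrt{\log n/n})$ discrepancy bound, multiplied by $n$, yields the $O(\sqrt{n\log n})$ error term.

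The main obstacle will be controlling the zeros in the symmetric difference between the disk $D_r(w)$ and the approximating sector, i.e. the region lying very close to the unit circle where $\tau_n$ is not well governed by the annular discrepancy. This is precisely the tangential-contact difficulty already confronted in Proposition \ref{prop2.4}, and I expect the resolution to mirror that argument: one covers the near-boundary crescent by finitely many small inscribed polygonal pieces (or by thin annular sectors with shrinking radial width) and invokes the bound $\E[n\tau_n(E)] = O(\sqrt{n\log n})$ for inscribed polygons. Since the boundary of $D_r(w)$ meets $\T$ transversally (non-tangentially) at the two endpoints of the arc, the same $O(\sqrt{n\log n})$ order should govern the crescent, so these contributions are absorbed into the stated error.

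Finally, I would assemble the pieces: the main term comes from the exact value of $\mu_\T(D_r(w))$, the dominant error from applying Corollary \ref{cor2.2} to the matching annular sector, and the boundary correction from the inscribed-polygon estimate of Proposition \ref{prop2.4}. All three are either exact or of order $O(\sqrt{n\log n})$, which gives the claimed asymptotic as $n\to\infty$.
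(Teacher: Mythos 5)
Your proposal is correct and follows essentially the same route as the paper, whose proof simply defers to Proposition 3.6 of \cite{PS} together with Proposition \ref{prop2.4}: compute $\mu_\T(D_r(w))=\frac{2\arcsin(r/2)}{\pi}$ for the main term, approximate the disk by an annular sector whose discrepancy is controlled by Corollary \ref{cor2.2}, and absorb the near-boundary crescents (which meet $\T$ non-tangentially) via the inscribed-polygon bound of Proposition \ref{prop2.4}. One small correction: the symmetric difference between $D_r(w)$ and any sector $A_\rho(\alpha,\beta)$ cannot lie entirely close to $\T$ (for instance, when $r\ge 1$ the disk contains the origin, and for $r<1$ either the inner part of the disk escapes the annulus or the sector contains points far from $\T$ outside the disk), so the portion of the symmetric difference separated from $\T$ must be handled by Proposition \ref{prop2.3}, which contributes only $O(\log n)$ expected zeros and is absorbed into the $O\left(\sqrt{n\log n}\right)$ error term.
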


\section{Random Polynomials Spanned by General Bases}

We now analyze the behavior of random polynomials spanned by general bases.  Throughout this section, let $B_k(z)=\sum_{j=0}^k b_{j,k}z^j$, where $b_{j,k}\in\C$ for all $j$ and $k$, and $b_{k,k}\neq 0$ for all $k$, be a polynomial basis, i.e. a linearly independent set of polynomials. Observe that $\text{deg}\  B_k=k$ for all $k\in\N\cup\{0\}.$ We study the zero distribution of random polynomials
$$ P_n(z) = \sum_{k=0}^n A_k B_k(z).$$
Throughout this section, we  assume that
\begin{align} \label{3.1}
\limsup_{k\to\infty} \|B_k\|_{\infty}^{1/k}\leq 1 \quad \mbox{and} \quad \lim_{k \rightarrow \infty} |b_{k,k}|^{1/k}=1.
\end{align}
It is well known that $\|B_k\|_{\infty} \ge |b_{k,k}|$ holds for all polynomials, so that \eqref{3.1}  in fact implies $\lim_{k\to\infty} \|B_k\|_{\infty}^{1/k} = 1$. Conditions \eqref{3.1} hold for many standard bases used for representing analytic functions in the unit disk, e.g., for various sequences of orthogonal polynomials (cf. Stahl and Totik \cite{ST}). In the latter case, random polynomials spanned by such bases are called random orthogonal polynomials. Their asymptotic zero distribution was recently studied in a series of papers by Shiffman and Zelditch \cite{SZ}, Bloom \cite{Bl}, and others.

Our main result of this section is the following:
\begin{theorem} \label{thm3.1}
For $P_n(z)=\sum_{k=0}^n A_k B_k(z),$ let $\{A_k\}_{k=0}^n$ be random variables satisfying $\E[|A_k|^t]<\infty,\ k=0,\ldots,n,$ for a fixed $t>0,$ and set $D_n := A_n b_{n,n} \sum_{k=0}^n A_k b_{0,k}$. If $\E[\log|D_n|] > -\infty$ then we have for all large $n\in \N$ that
\begin{align} \label{3.2}
&\E\left[\left|\tau_n(A_r(\alpha,\beta))-\frac{\beta-\alpha}{2\pi}\right|\right] \\ \nonumber &\le C_r \left[\frac{1}{n}\left(\frac{1}{t}\log \left(\sum_{k=0}^n \E[|A_k|^t]\right) + \log \max_{0 \le k \le n} \|B_k\|_{\infty} - \frac{1}{2} \E[\log|D_n|]\right)\right]^{1/2},
\end{align}
where
$$ C_r = \sqrt{\frac{2\pi}{\mathbf{k}}}+\frac{2}{1-r}. $$
In particular, if $\E[\log|A_n|] > -\infty$ and $\E[\log|A_0+z|] \ge L > -\infty$ for all $z\in\C,$ then
\begin{align} \label{3.3}
\E[\log|D_n|] \ge \log|b_{0,0}b_{n,n}| + \E[\log|A_n|] + L > -\infty,
\end{align}
and \eqref{3.2} holds.
\end{theorem}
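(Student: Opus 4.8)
The plan is to run the argument behind Theorem \ref{thm2.1} on the polynomial $P_n$ written in the monomial basis, the only genuinely new inputs being the identification of $D_n$ with a product of monomial coefficients and the passage from $\|P_n\|_{\infty}$ to the basis $\{B_k\}$. First I would expand $P_n(z)=\sum_{j=0}^n c_j z^j$, where $c_j=\sum_{k=j}^n A_k b_{j,k}$. Since $\deg B_k=k$ and $b_{k,k}\ne 0$, the leading coefficient is $c_n=A_n b_{n,n}$ and the constant term is $c_0=\sum_{k=0}^n A_k b_{0,k}$, so that $c_0 c_n=D_n$ is exactly the quantity in the statement. The hypothesis $\E[\log|D_n|]>-\infty$ forces $D_n\ne 0$ almost surely, hence $\deg P_n=n$ with nonvanishing constant term on the relevant event, and the deterministic discrepancy estimate that drives Theorem \ref{thm2.1} applies pointwise in the form
$$\left|\tau_n(A_r(\alpha,\beta))-\frac{\beta-\alpha}{2\pi}\right|\le C_r\left[\frac{1}{n}\left(\log\|P_n\|_{\infty}-\frac{1}{2}\log|D_n|\right)\right]^{1/2},$$
the radial part contributing the $2/(1-r)$ term in $C_r$ and the angular Erd\H{o}s--Tur\'an part contributing $\sqrt{2\pi/\mathbf{k}}$.

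Next I would take expectations. The Cauchy coefficient estimates give $|c_0|,|c_n|\le\|P_n\|_{\infty}$, so the bracketed quantity is nonnegative and concavity of the square root (Jensen) yields
$$\E\left[\left|\tau_n(A_r(\alpha,\beta))-\frac{\beta-\alpha}{2\pi}\right|\right]\le C_r\left[\frac{1}{n}\left(\E[\log\|P_n\|_{\infty}]-\frac{1}{2}\E[\log|D_n|]\right)\right]^{1/2}.$$
It then remains to bound $\E[\log\|P_n\|_{\infty}]$. On $\T$ one has $\|P_n\|_{\infty}\le\sum_{k=0}^n|A_k|\,\|B_k\|_{\infty}\le(\max_{0\le k\le n}\|B_k\|_{\infty})\sum_{k=0}^n|A_k|$, whence $\log\|P_n\|_{\infty}\le\log\max_{0\le k\le n}\|B_k\|_{\infty}+\log\sum_{k=0}^n|A_k|$. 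Invoking the same moment bound as in Theorem \ref{thm2.1}, namely $\E[\log\sum_k|A_k|]\le\frac1t\log\sum_{k=0}^n\E[|A_k|^t]$ (pointwise subadditivity of $x\mapsto x^t$ followed by Jensen applied to $\log$), and substituting, produces the three terms of \eqref{3.2}; the only new feature relative to Theorem \ref{thm2.1} is the additive $\log\max_k\|B_k\|_{\infty}$ coming from passing through the basis.

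Finally, for the sufficient condition \eqref{3.3} I would factor the constant term. Writing $c_0=\sum_{k=0}^n A_k b_{0,k}=b_{0,0}(A_0+W)$ with $W:=b_{0,0}^{-1}\sum_{k=1}^n A_k b_{0,k}$ (legitimate since $b_{0,0}\ne 0$, being the leading coefficient of $B_0$), one obtains
$$\E[\log|D_n|]=\log|b_{0,0}b_{n,n}|+\E[\log|A_n|]+\E[\log|A_0+W|],$$
so the claim reduces to the single inequality $\E[\log|A_0+W|]\ge L$. The main obstacle is that $W$ is a random shift depending on $A_1,\dots,A_n$, whereas the hypothesis $\E[\log|A_0+z|]\ge L$ is assumed only for deterministic $z\in\C$. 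I would resolve this by conditioning on $\sigma(A_1,\dots,A_n)$ and using the tower property: once $W$ is frozen at a value $w$, the inner expectation $\E[\log|A_0+w|\mid A_1,\dots,A_n]$ is at least $L$ provided the conditional law of $A_0$ still obeys the hypothesis, in particular whenever $A_0$ is independent of the remaining coefficients. This is the one place where the dependence structure genuinely enters, and I expect it to be the delicate step; everything else is a direct transcription of the Section 2 machinery with $D_n$ playing the role of $A_0 A_n$.
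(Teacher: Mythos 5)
Your proposal follows essentially the same route as the paper's proof: identify the leading coefficient $A_n b_{n,n}$ and constant term $\sum_{k=0}^n A_k b_{0,k}$ so that $c_0c_n=D_n$, apply the Erd\H{o}s--Tur\'an discrepancy lemma (Lemma \ref{lem6.1}) together with $m(Q_n)\le\log\|Q_n\|_{\infty}$, bound $\|P_n\|_{\infty}\le\max_{0\le k\le n}\|B_k\|_{\infty}\sum_{k=0}^n|A_k|$, and invoke the fractional-moment estimate of Lemma \ref{lem6.2}. Two remarks are in order.

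First, your intermediate \emph{pointwise} inequality with the combined constant $C_r$ is not valid as stated: setting $X=\log\bigl(\|P_n\|_{\infty}/\sqrt{|D_n|}\bigr)\ge 0$, the discrepancy lemma gives the two terms $\sqrt{2\pi/\mathbf{k}}\,\sqrt{X/n}$ and $\tfrac{2}{1-r}\cdot\tfrac{X}{n}$, and the absorption $\tfrac{X}{n}\le\sqrt{X/n}$ holds only on the event $X\le n$, which can fail with positive probability since $X$ is unbounded. The paper avoids this by taking expectations of the two-term bound, applying Jensen only to the square-root term, and then absorbing the linear term $\tfrac{2}{n(1-r)}\E[X]$ into $C_r\sqrt{\E[X]/n}$ for all large $n$, which is legitimate because $\E[X]$ is finite under the hypotheses. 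Since you take expectations immediately afterwards anyway, your final chain of estimates coincides with the paper's; you only need to reorder so that the passage to $C_r$ happens after the expectation.

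Second, concerning \eqref{3.3}: your worry about the random shift $W=b_{0,0}^{-1}\sum_{k=1}^n A_k b_{0,k}$ is well founded, and in fact the paper's own proof does exactly what you hesitated to do --- it applies the hypothesis $\E[\log|A_0+z|]\ge L$, stated for deterministic $z\in\C$, directly to this random shift. Your conditioning/tower-property repair is the correct justification when $A_0$ is independent of $(A_1,\ldots,A_n)$; without some such assumption the step genuinely fails. For instance, if $A_0$ is uniform on $\T$, then $\E[\log|A_0+z|]=\log\max(|z|,1)\ge 0$ for every fixed $z$, yet choosing $A_1=-A_0b_{0,0}/b_{0,1}$ (dependent coefficients, allowed by the theorem as stated) makes the constant term vanish identically, so $\E[\log|D_n|]=-\infty$ and \eqref{3.3} is violated. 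So on this point you are more careful than the paper, not less: the theorem's second assertion implicitly requires independence of $A_0$ from the remaining coefficients (or a conditional reformulation of the hypothesis), and your sketch supplies the missing argument in that case.
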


An example of a typical basis satisfying \eqref{3.1} is given below by orthonormal polynomials on the unit circle. We apply Theorem \ref{thm3.1} to obtain a quantitative result on the zero distribution of random orthogonal polynomials.

\begin{corollary} \label{cor3.2}
Let $P_n(z)=\sum_{k=0}^n A_{k,n} B_k(z),\ n\in\N,$ be a sequence of random orthogonal polynomials. Suppose that the following uniform estimates for the coefficients hold true:
\begin{align} \label{3.4}
\sup\{\E[|A_{k,n}|^t]\ \vert \ k=0,\ldots,n;\ n\in\N\} < \infty,\quad t>0,
\end{align}
and
\begin{align} \label{3.5}
\min\left(\inf_{n\in\N} \E[\log|A_{n,n}|], \inf_{n\in\N,z\in\C} \E[\log|A_{0,n}+z|]\right) > - \infty.
\end{align}
If the basis polynomials $B_k$ are orthonormal with respect to a positive Borel measure $\mu$ supported on $\T=\{e^{i\theta}:0\le\theta<2\pi\}$, such that the Radon-Nikodym derivative $d\mu/d\theta>0$ for almost every $\theta\in[0,2\pi),$ then \eqref{3.1} is satisfied and
\begin{align} \label{3.6}
\lim_{n\to\infty} \E\left[\left|\tau_n(A_r(\alpha,\beta))-\frac{\beta-\alpha}{2\pi}\right|\right] = 0.
\end{align}
Furthermore, if $d\mu(\theta)=w(\theta)\,d\theta$, where $w(\theta)\ge c > 0,\ \theta\in[0,2\pi),$ then
\begin{align} \label{3.7}
\E\left[\left|\tau_n(A_r(\alpha,\beta))-\frac{\beta-\alpha}{2\pi}\right|\right] = O\left(\sqrt{\frac{\log{n}}{n}}\right) \quad n\to\infty.
\end{align}
\end{corollary}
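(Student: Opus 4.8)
The plan is to apply Theorem \ref{thm3.1} to each polynomial $P_n=\sum_{k=0}^n A_{k,n}B_k$ separately and then feed the uniform hypotheses \eqref{3.4} and \eqref{3.5} into the resulting bound \eqref{3.2} to force it to zero. Before that can be done I first need to confirm that the orthonormal basis obeys the standing assumptions \eqref{3.1}. Writing $B_k=\phi_k$ for the orthonormal polynomials and $b_{k,k}=\kappa_k$ for their leading coefficients, the hypothesis $d\mu/d\theta>0$ almost everywhere places $\mu$ in the class of \emph{regular} measures in the sense of Stahl and Totik \cite{ST} (on $\T$ this is the Erd\H{o}s--Tur\'an criterion). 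Since $\mathrm{cap}(\T)=1$, regularity yields the $n$-th root asymptotics $\lim_{k\to\infty}\kappa_k^{1/k}=1$ together with $\limsup_{k\to\infty}\|\phi_k\|_\infty^{1/k}\le 1$, which is exactly \eqref{3.1}.

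With \eqref{3.1} secured, I would fix $n$ and verify the hypotheses of Theorem \ref{thm3.1} for $P_n$. The fractional-moment requirement is immediate from \eqref{3.4}. To control $D_n=A_{n,n}b_{n,n}\sum_{k=0}^n A_{k,n}b_{0,k}$, I would invoke the lower bound \eqref{3.3}: the two quantities it involves, $\E[\log|A_{n,n}|]$ and $L:=\inf_{n,z}\E[\log|A_{0,n}+z|]$, are bounded below uniformly in $n$ precisely by \eqref{3.5}, so that $\E[\log|D_n|]\ge\log|b_{0,0}b_{n,n}|+\E[\log|A_{n,n}|]+L>-\infty$ and \eqref{3.2} is valid for every large $n$.

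The heart of the argument is then to estimate the three terms inside the bracket of \eqref{3.2} and show their sum is $o(n)$. Writing $M$ for the finite supremum in \eqref{3.4}, the moment term satisfies $\frac1t\log\sum_{k=0}^n\E[|A_{k,n}|^t]\le\frac1t\log\big((n+1)M\big)=O(\log n)$. The coefficient term $-\tfrac12\E[\log|D_n|]$ is bounded above by $-\tfrac12\big(\log|b_{0,0}b_{n,n}|+\E[\log|A_{n,n}|]+L\big)$; here I would use the standard fact that the leading coefficients of orthonormal polynomials are nondecreasing, $\kappa_n\ge\kappa_0>0$, so that $-\tfrac12\log|b_{n,n}|$ stays bounded above, while the remaining pieces are constants controlled by \eqref{3.5}, making this term $O(1)$. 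Finally, the basis term $\log\max_{0\le k\le n}\|B_k\|_\infty$ is $o(n)$ by the root asymptotics just established. Dividing by $n$ and taking the square root drives \eqref{3.2} to zero, giving \eqref{3.6}.

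For the sharp rate \eqref{3.7} the only term needing improvement is the basis term, and this is where $w\ge c>0$ enters. From orthonormality, $\int_\T|\phi_k|^2 w\,d\theta=1$, hence $\int_\T|\phi_k|^2\,d\theta\le 1/c$; combining this with the Nikolskii-type inequality $\|p\|_\infty\le\sqrt{k+1}\,\|p\|_{L^2(d\theta/2\pi)}$ for polynomials of degree $k$ (Cauchy--Schwarz on the Taylor coefficients) gives $\|B_k\|_\infty\le\sqrt{(k+1)/(2\pi c)}$, so $\log\max_{0\le k\le n}\|B_k\|_\infty=O(\log n)$. All three terms are then $O(\log n)$, and \eqref{3.2} becomes $O(\sqrt{\log n/n})$, which is \eqref{3.7}. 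I expect the main obstacle to be the first step: correctly citing and applying the regularity theory to pass from the measure-theoretic hypothesis $\mu'>0$ a.e. to the analytic asymptotics \eqref{3.1}; once those are in hand, the remainder is a matter of substituting the uniform bounds and tracking which term dominates in each regime.
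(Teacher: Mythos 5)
Your proposal is correct and follows essentially the same route as the paper: verify \eqref{3.1} via the Stahl--Totik regularity theory, apply Theorem \ref{thm3.1} together with the lower bound \eqref{3.3} under the uniform hypotheses \eqref{3.4}--\eqref{3.5}, bound the moment term by $O(\log n)$, and for \eqref{3.7} control $\max_{0\le k\le n}\|B_k\|_\infty$ by exactly the same Cauchy--Schwarz/Parseval argument exploiting $w\ge c>0$. The only cosmetic difference is that you bound $|b_{n,n}|$ below via monotonicity of the leading coefficients ($\kappa_n\ge\kappa_0>0$), whereas the paper plugs $Q_n(z)=z^n$ into the extremal problem $|b_{n,n}|^{-2}=\inf\left\{\int|Q_n|^2\,d\mu\right\}$ to get $|b_{n,n}|\ge\left(\mu(\T)\right)^{-1/2}$; these are the same fact, derived from the same extremal characterization.
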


It is clear that if the coefficients have identical distributions, then all uniform bounds in \eqref{3.4} and \eqref{3.5} reduce to those on the single coefficient $A_0.$ One can relax conditions on the orthogonality measure $\mu$ while preserving the results, e.g., one can show that \eqref{3.7} also holds for the generalized Jacobi weights of the form $w(\theta)=v(\theta)\prod_{j=1}^J |\theta-\theta_j|^{\alpha_j},$ where $v(\theta)\ge c > 0,\ \theta\in[0,2\pi).$ Note that the analogs of Propositions \ref{prop2.4}-\ref{prop2.5} for the random orthogonal polynomials follow from \eqref{3.7}.

\section{Discrete Random Coefficients}

Let $A_0, A_1,\dots$ be independent and identically distributed (iid) complex discrete random variables. We show that one can extend the ideas of \cite{PS} and prove essentially the same results in the discrete case. Furthermore, since any real random variable is the limit of an increasing sequence of discrete random variables, we are able to extend the arguments to arbitrary random variables. We assume as before that $\E[|A_0|^t]=\mu<\infty$ for a fixed real $t>0$.

\begin{proposition} \label{prop4.1}
Let $A_0, A_1,\dots$ be iid complex random variables, and let $Y_n:=\displaystyle\max_{0\leq k\leq n}|A_k|$. If $\mu:=\E[|A_0|^t]<\infty$, where $t>0$, then $$\E[\log Y_n] \leq \frac{\log (n+1)+\log \mu}{t}.$$
\end{proposition}
This result provides an immediate extension of Theorem 3.3 of \cite{PS} to arbitrary random variables (satisfying the moment assumption) by following the same proof.
Indeed, we have that
\begin{align*}
\E[\log\|P_n\|_{\infty}]&=\E\left[\log\left( \sup_{z\in\T} \left|\sum_{k=0}^nA_kz^k\right|\right)\right]
= \E\left[\log \left(\sum_{k=0}^n|A_k|\right)\right] \\
&\leq \E\left[\log \left((n+1) \max_{0\leq k\leq n} |A_k|\right)\right] = \log (n+1) +\E[\log Y_n].
\end{align*}
Thus referring to the proof of Theorem 3.3 of \cite{PS} and using our bound of $\E[\log Y_n]$ gives the result.

\section{Dependent Coefficients}

We generalize Theorem 3.7 of \cite{PS} in this section, replacing the requirement that the first and the second
moments of the absolute values of all coefficients be equal with the requirement they be uniformly bounded.
More precisely, we assume that
\begin{equation}\label{5.1}
\sup_{k} \E[|A_k|]=:M<\infty \ \ \text{and} \ \ \sup_k \text{Var}[|A_k|]=:S^2<\infty.
\end{equation}
Following the ideas of Arnold and Groeneveld \cite{AG} (see also \cite{DN}), we show that
\begin{proposition} \label{prop5.1}
If \eqref{5.1} is satisfied, then we have for $Y_n=\max_{0\le k\le n} |A_k|$ that
$$\E[Y_n] = O(\sqrt{n}) \quad \mbox{as }n\to\infty.$$
\end{proposition}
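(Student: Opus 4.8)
The plan is to exploit the fact that the asserted bound depends only on the first two moments, so that independence plays no role and the argument applies verbatim to dependent coefficients. Writing $X_k := |A_k|$, I would first record the consequence of the variance hypothesis: for every $k$,
$$\E[X_k^2] = \text{Var}[|A_k|] + \bigl(\E[X_k]\bigr)^2 \le S^2 + M^2,$$
so the second moments of the $X_k$ are uniformly bounded. This is the only place the variance assumption enters.

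The key idea, following Arnold and Groeneveld, is to center the variables around their \emph{sample} mean $\bar X := \frac{1}{n+1}\sum_{k=0}^n X_k$ rather than around their (possibly distinct) true means. Setting $y_k := X_k - \bar X$ one has $\sum_{k=0}^n y_k = 0$, and for any real numbers summing to zero an elementary optimization (Lagrange multipliers, or observing that the extremum is attained with all but one coordinate equal) gives the sharp inequality $\max_{0\le k\le n} y_k \le \sqrt{n/(n+1)}\,\bigl(\sum_{k=0}^n y_k^2\bigr)^{1/2}$. Applied pointwise this yields the deterministic bound $Y_n - \bar X \le \sqrt{n/(n+1)}\,\bigl(\sum_{k=0}^n (X_k - \bar X)^2\bigr)^{1/2}$.

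From here the estimate is routine. Taking expectations and applying Jensen's inequality $\E[\sqrt{Z}]\le\sqrt{\E[Z]}$ to move the square root outside, I would then discard the nonnegative term in $\sum_k (X_k-\bar X)^2 = \sum_k X_k^2 - (n+1)\bar X^2 \le \sum_k X_k^2$, so that $\E\bigl[\sum_k (X_k-\bar X)^2\bigr] \le \sum_{k=0}^n \E[X_k^2] \le (n+1)(S^2+M^2)$. Combining this with $\E[\bar X]\le M$ gives
$$\E[Y_n] \le M + \sqrt{\frac{n}{n+1}}\,\sqrt{(n+1)(S^2+M^2)} = M + \sqrt{S^2+M^2}\,\sqrt{n} = O(\sqrt{n}).$$

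There is essentially no obstacle once one commits to centering at the sample mean: this is precisely what produces the constraint $\sum_k y_k = 0$, and hence the favorable factor $\sqrt{n/(n+1)}$ in place of the trivial factor $1$ in the max-versus-$\ell^2$ inequality, while simultaneously removing any need for independence. The only mild subtlety is verifying the sharp constant in that deterministic inequality, which is a one-line Lagrange multiplier computation; everything else is linearity of expectation together with Jensen.
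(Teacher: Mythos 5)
Your proof is correct, and it reaches the stated bound by a genuinely different and more self-contained route than the paper. The paper's proof goes through the Arnold--Groeneveld lemma on order statistics: it considers linear combinations $\sum_{i} c_i X_{i:n}$ of the \emph{ordered} variables, centers the coefficients at $\bar c$ and the variables at the average $\bar\mu$ of the true means, and uses the identity $\sum_i X_{i:n}=\sum_i X_i$ so that this centering is legitimate in expectation; Cauchy--Schwarz and Jensen then give $\E[Y_n]\le M+\sqrt{(4M^2+S^2)(n+1)}$ upon choosing $c_n=1$ and all other $c_i=0$. You instead center at the sample mean $\bar X$, which converts the key step into a purely deterministic inequality, $\max_k y_k \le \sqrt{n/(n+1)}\,\bigl(\sum_k y_k^2\bigr)^{1/2}$ whenever $\sum_k y_k=0$, applied pointwise before any expectation is taken; Jensen and linearity of expectation finish the job. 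The two arguments share the same engine --- Cauchy--Schwarz after a centering that produces the factor $\sqrt{n/(n+1)}$, which is exactly $\bigl(\sum_i (c_i-\bar c)^2\bigr)^{1/2}$ for the paper's choice of $c_i$ --- but your version avoids order statistics entirely, needs no expectation identity to justify the centering, and yields the slightly cleaner constant $M+\sqrt{S^2+M^2}\,\sqrt{n}$ (you bound $\E[(X_k-\bar X)^2]$ via raw second moments $\E[X_k^2]\le S^2+M^2$, rather than estimating $(\mu_i-\bar\mu)^2+\sigma_i^2$ term by term as the paper does). What the paper's route buys in exchange is generality and context: the Arnold--Groeneveld lemma controls arbitrary linear combinations of order statistics, not just the maximum, and ties the proposition to the order-statistics literature cited there.
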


An analog of the result from \cite{PS} is obtained along the same lines as before.
\begin{theorem} \label{thm5.2}
If the (possibly dependent) coefficients of $P_n$  satisfy \eqref{5.1} as well as $\E[\log|A_0|] > -\infty$ and $\E[\log|A_n|] > -\infty$, then
\begin{align*}
\E \left[\left|\tau_n(A_r(\alpha,\beta))-\frac{\beta-\alpha}{2\pi}\right|\right]
\leq C_r \sqrt{\frac{\frac{3}{2}\log(n+1)-\frac{1}{2}\E[\log|A_0|] - \frac{1}{2}\E[\log|A_n|] + O(1)}{n}}
\end{align*}
as $n\rightarrow\infty$.
\end{theorem}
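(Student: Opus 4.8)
The plan is to follow the same route used for Theorem~\ref{thm2.1}, but to replace the fractional-moment control of $\E[\log\|P_n\|_\infty]$ by the second-moment control supplied through Proposition~\ref{prop5.1}. The starting point is an Erd\H{o}s--Tur\'an type discrepancy inequality, which bounds $|\tau_n(A_r(\alpha,\beta)) - (\beta-\alpha)/(2\pi)|$ by a quantity of the form $C_r\,(S_n/n)^{1/2}$, where $S_n$ measures the logarithmic size of $P_n$ on $\T$ relative to its leading and constant coefficients; concretely one expects
\begin{align*}
S_n = \log\|P_n\|_\infty - \tfrac12\log|A_0| - \tfrac12\log|A_n|.
\end{align*}
Taking expectations and applying the Cauchy--Schwarz inequality (or Jensen, since $|x|^{1/2}$ is concave) to pull the expectation inside the square root gives
\begin{align*}
\E\left[\left|\tau_n(A_r(\alpha,\beta))-\frac{\beta-\alpha}{2\pi}\right|\right] \le C_r \left(\frac{\E[S_n]}{n}\right)^{1/2},
\end{align*}
so the whole problem reduces to bounding $\E[S_n]$ from above.

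First I would estimate $\E[\log\|P_n\|_\infty]$. Since $\|P_n\|_\infty = \sup_{\T}\left|\sum_{k=0}^n A_k z^k\right| \le \sum_{k=0}^n |A_k| \le (n+1)Y_n$, monotonicity of $\log$ and of expectation give $\E[\log\|P_n\|_\infty] \le \log(n+1) + \E[\log Y_n]$. The key move is then to control $\E[\log Y_n]$ by $\E[Y_n]$: using the elementary bound $\log x \le x/e \le x$ for $x>0$ is too lossy, so instead I would use $\log Y_n \le \log^+ Y_n$ together with concavity, applying Jensen's inequality in the form $\E[\log Y_n] \le \log \E[Y_n]$. By Proposition~\ref{prop5.1}, $\E[Y_n] = O(\sqrt n)$, whence $\E[\log Y_n] \le \log\E[Y_n] = \tfrac12\log n + O(1)$. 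Combining, $\E[\log\|P_n\|_\infty] \le \log(n+1) + \tfrac12\log n + O(1) = \tfrac32\log(n+1) + O(1)$, which is exactly the $\tfrac32\log(n+1)$ term appearing in the statement.

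The remaining two terms are handled directly: the hypotheses $\E[\log|A_0|] > -\infty$ and $\E[\log|A_n|] > -\infty$ ensure that $-\tfrac12\E[\log|A_0|]$ and $-\tfrac12\E[\log|A_n|]$ are finite, and these are precisely the quantities that enter $\E[S_n]$ with a plus sign once the $-\tfrac12\log|A_0A_n|$ contribution is taken in expectation. Assembling $\E[S_n] \le \tfrac32\log(n+1) - \tfrac12\E[\log|A_0|] - \tfrac12\E[\log|A_n|] + O(1)$ and substituting into the discrepancy bound yields the claimed inequality.

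The main obstacle I anticipate is the step $\E[\log Y_n] \le \log\E[Y_n]$ combined with Proposition~\ref{prop5.1}, because the elementary chain $\|P_n\|_\infty \le (n+1)Y_n$ loses a full factor of $\log(n+1)$, and one must be careful that this loss is genuinely only $\log(n+1)$ and not worse; in particular, the jump from the fractional-moment bound (used in Section~2, giving $\E[\log Y_n] \le (\log(n+1)+\log\mu)/t$) to the second-moment regime forces the use of $\E[Y_n] = O(\sqrt n)$ rather than a fractional-moment estimate, and it is the $\sqrt n$ growth that produces the extra $\tfrac12\log n$ and hence the coefficient $\tfrac32$ in front of $\log(n+1)$. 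Verifying that the Erd\H{o}s--Tur\'an bound from Section~2 applies verbatim under only the weaker moment hypotheses~\eqref{5.1} — i.e. that nothing in its derivation secretly used the fractional moments beyond controlling $\E[\log\|P_n\|_\infty]$ — is the point that must be checked with care, but I expect it to go through since that bound is deterministic given a single realization of $P_n$ and only the averaging step depends on the distributional assumptions.
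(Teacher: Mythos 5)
Your proposal is correct and follows essentially the same route as the paper's own proof: the Erd\H{o}s--Tur\'an bound of Lemma \ref{lem6.1} combined with Jensen's inequality, the chain $\|P_n\|_\infty \le (n+1)Y_n$, the step $\E[\log Y_n] \le \log \E[Y_n]$, and Proposition \ref{prop5.1} giving $\E[Y_n]=O(\sqrt n)$, which together produce the $\tfrac{3}{2}\log(n+1)+O(1)$ term. Your concluding concern is also resolved exactly as you expect: the discrepancy inequality is deterministic, so only the expectation estimates depend on the hypotheses \eqref{5.1}.
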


Clearly, this result has more restrictive assumptions than Theorem \ref{thm2.1}.

\section{Proofs}

\subsection{Proofs for Section 2}

Define the logarithmic Mahler measure (logarithm of geometric mean) of $P_n$ by
$$m(P_n)=\frac{1}{2\pi}\int_0^{2\pi}\log|P_n(e^{i\theta})|d\theta.$$
It is immediate to see that $m(P_n) \le \log\|P_n\|_{\infty}.$

The majority of our results are obtained with help of the following modified version of the discrepancy theorem due to Erd\H{o}s and Tur\'an (cf. Proposition 2.1 of \cite{PS}):

\begin{lemma}\label{lem6.1}
Let $P_n(z)=\sum_{k=0}^n c_k z^k,\ c_k\in\C$, and assume $c_0c_n\neq 0.$ For any $r\in(0,1)$ and $0\le \alpha < \beta < 2\pi,$ we have
\begin{align} \label{6.1}
\left| \tau_n\left(A_r(\alpha,\beta)\right) - \frac{\beta-\alpha}{2\pi}\right| &\leq
\sqrt{\frac{2\pi}{{\bf k}}} \sqrt{\frac{1}{n}\,\log\frac{\|P_n\|_{\infty}}{\sqrt{|c_0c_n|}}} \\ \nonumber &+ \frac{2}{n(1-r)} \, m\left(\frac{P_n}{\sqrt{|c_0c_n|}}\right),
\end{align}
where $\mathbf{k}=\sum_{k=0}^{\infty}(-1)^k/(2k+1)^2$ is Catalan's constant.
\end{lemma}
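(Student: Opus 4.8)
The plan is to separate the two ways in which $\tau_n(A_r(\alpha,\beta))$ can deviate from $(\beta-\alpha)/(2\pi)$: a purely \emph{angular} deviation, controlled by the classical Erd\H{o}s--Tur\'an theorem, and a purely \emph{radial} deviation coming from zeros that lie far from $\T$, controlled by Jensen's formula. Writing $S=\{z:\alpha\le\arg z<\beta\}$ for the full sector and $A=\{z:r<|z|<1/r\}$ for the annulus, so that $A_r(\alpha,\beta)=S\cap A$, I would first record, with $t:=(\beta-\alpha)/(2\pi)$, the decomposition $\tau_n(A_r(\alpha,\beta))-t=\bigl[\tau_n(S)-t\bigr]-\tau_n(S\cap A^c)$. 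Since $0\le\tau_n(S\cap A^c)\le\tau_n(A^c)$, this yields the elementary bound
\begin{align*}
\left|\tau_n(A_r(\alpha,\beta))-\frac{\beta-\alpha}{2\pi}\right| \le \left|\tau_n(S)-\frac{\beta-\alpha}{2\pi}\right| + \tau_n(A^c),
\end{align*}
whose two summands are to be matched against the two terms on the right-hand side of \eqref{6.1}.

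For the angular term I would invoke the sharp form of the Erd\H{o}s--Tur\'an discrepancy estimate for the arguments of the zeros (the content behind Proposition 2.1 of \cite{PS}), which for $c_0c_n\ne0$ bounds $|\tau_n(S)-(\beta-\alpha)/(2\pi)|$ by $\sqrt{2\pi/\mathbf{k}}\,\sqrt{n^{-1}\log(\|P_n\|_\infty/\sqrt{|c_0c_n|})}$. The half-open sector and the possibility of zeros on the bounding rays cause no trouble, as the estimate holds uniformly in $\alpha,\beta$; the argument of the logarithm is nonnegative because $m(P_n)\le\log\|P_n\|_\infty$ gives $\log(\|P_n\|_\infty/\sqrt{|c_0c_n|})\ge m(P_n/\sqrt{|c_0c_n|})\ge0$.

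For the radial term I would compute $\tau_n(A^c)=N_r/n$, where $N_r$ is the number of zeros $Z_k$ with $|Z_k|\le r$ or $|Z_k|\ge 1/r$, by relating $N_r$ to the Mahler measure of the normalized polynomial. Using the factorization $P_n=c_n\prod_k(z-Z_k)$ together with $|c_0|=|c_n|\prod_k|Z_k|$, a short computation with Jensen's formula (i.e. $m(P_n)=\log|c_n|+\sum_k\log^+|Z_k|$) gives the clean identity
\begin{align*}
m\!\left(\frac{P_n}{\sqrt{|c_0c_n|}}\right)=\frac12\sum_{k=1}^n\bigl|\log|Z_k|\bigr|.
\end{align*}
Each zero counted by $N_r$ contributes at least $\log(1/r)$ to this sum, so $N_r\log(1/r)\le 2\,m(P_n/\sqrt{|c_0c_n|})$, and the elementary bound $\log(1/r)\ge 1-r$ for $r\in(0,1)$ upgrades this to $N_r\le \frac{2}{1-r}\,m(P_n/\sqrt{|c_0c_n|})$. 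Dividing by $n$ reproduces exactly the second term of \eqref{6.1}, and combining it with the angular estimate finishes the proof.

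The main obstacle is the angular estimate with the sharp constant $\sqrt{2\pi/\mathbf{k}}$: producing Catalan's constant rather than some larger absolute constant requires the extremal kernel optimization intrinsic to the Erd\H{o}s--Tur\'an method, and this is precisely the delicate ingredient I would import from \cite{PS} rather than reprove. By contrast the radial estimate is entirely self-contained; the only points demanding care there are the bookkeeping of half-open versus closed sets in the decomposition and the verification of the Mahler-measure identity, both of which are routine once Jensen's formula is in hand.
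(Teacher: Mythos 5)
Your proposal is correct, but it is not the paper's route: the paper gives no proof of Lemma \ref{lem6.1} at all, importing it wholesale as a ``modified version'' of the Erd\H{o}s--Tur\'an discrepancy theorem, namely Proposition 2.1 of \cite{PS}. What you have done is reconstruct the proof of that cited result, and the reconstruction is sound. The decomposition $\left|\tau_n(A_r(\alpha,\beta))-\frac{\beta-\alpha}{2\pi}\right|\le\left|\tau_n(S)-\frac{\beta-\alpha}{2\pi}\right|+\tau_n(A^c)$ is exactly the right splitting into angular and radial discrepancy; the angular part is Ganelius' sharp form of Erd\H{o}s--Tur\'an (the paper's bibliography lists Ganelius \cite{Gan} precisely for the constant $\sqrt{2\pi/\mathbf{k}}$, so importing it is legitimate and unavoidable); and your radial computation checks out in every detail: Jensen's formula plus $|c_0|=|c_n|\prod_k|Z_k|$ gives $m\bigl(P_n/\sqrt{|c_0c_n|}\bigr)=\frac12\sum_{k=1}^n\bigl|\log|Z_k|\bigr|\ge0$, each zero outside the annulus contributes at least $\log(1/r)$, and $\log(1/r)\ge1-r$ converts this to $\tau_n(A^c)\le\frac{2}{n(1-r)}m\bigl(P_n/\sqrt{|c_0c_n|}\bigr)$, which is exactly the radial bound (it reappears later in the paper as the quoted inequality (5.3) of \cite{PS} used in the proof of Proposition \ref{prop2.3}). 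What your approach buys is self-containedness: a reader of this paper alone cannot verify Lemma \ref{lem6.1}, whereas your argument reduces it to one classical black box (Ganelius) plus elementary complex analysis. What the paper's citation buys is brevity and the assurance of the sharp constant without redoing the kernel optimization, which, as you correctly note, is the one genuinely delicate ingredient.
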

This estimate shows how close the zero counting measure $\tau_n$ is to $\mu_{\T}.$

The following lemma is used several times below.

\begin{lemma} \label{lem6.2}
If $A_k,\ k=0,\ldots,n,$ are independent complex random variables satisfying $\E[|A_k|^t]<\infty,\ k=0,\ldots,n,$ for a fixed $t>0,$ then
\begin{align} \label{6.2}
\E\[\log\sum_{k=0}^n |A_k|\] \le \frac{1}{t}\log \left(\sum_{k=0}^n \E[|A_k|^t]\right).
\end{align}
\end{lemma}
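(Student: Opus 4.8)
The plan is to trade the bound on $\E[\log(\cdot)]$ for a bound on $\log\E[(\cdot)]$ by Jensen's inequality, and then to control the resulting moment by elementary subadditivity. Set $S:=\sum_{k=0}^n|A_k|$. Because $t>0$ we have $\log S^t=t\log S$, and since $\log$ is concave, Jensen's inequality yields
\[
t\,\E[\log S]=\E\!\left[\log S^t\right]\le \log\E\!\left[S^t\right],
\]
provided $\E[S^t]<\infty$. Dividing by $t$, the lemma reduces to the single estimate $\E[S^t]\le\sum_{k=0}^n\E[|A_k|^t]$.

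To prove this I would use the pointwise inequality $S^t=\bigl(\sum_k|A_k|\bigr)^t\le\sum_k|A_k|^t$, which is exactly the subadditivity of $x\mapsto x^t$ on $[0,\infty)$ and holds because, for $0<t\le1$, this map is concave with value $0$ at the origin. Taking expectations and using linearity gives $\E[S^t]\le\sum_k\E[|A_k|^t]<\infty$; this finiteness is what justifies the Jensen step above, and it also shows $\E[\log S]$ is well defined, since $\log^+S\le\tfrac1t S^t$ forces the positive part of $\log S$ to be integrable. It is worth stressing that this argument uses only concavity: neither independence nor any joint distributional assumption on the $A_k$ enters.

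The one genuinely delicate point—and the main obstacle—is the restriction $0<t\le1$ hidden in the subadditivity step. For $t>1$ the inequality $(\sum_k a_k)^t\le\sum_k a_k^t$ reverses, and the conclusion itself becomes false: taking $A_0\equiv A_1\equiv1$ and $t=2$ gives left-hand side $\log2$ but right-hand side $\tfrac12\log2$. I would therefore read the hypothesis in the relevant (fractional-moment) regime $0<t\le1$, which is precisely what Theorem~\ref{thm2.1} invokes. If one only knows $\E[|A_k|^{t_0}]<\infty$ for some $t_0>1$, then finiteness of all lower moments, $\E[|A_k|^s]\le1+\E[|A_k|^{t_0}]$ for $s\le t_0$, lets one apply the lemma with any exponent $s\in(0,1]$. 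The remaining matter—the degenerate event $S=0$, where $\log S=-\infty$ and the inequality holds trivially—requires no further comment.
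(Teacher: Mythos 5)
Your proof is correct and follows essentially the same route as the paper's: both arguments combine the subadditivity of $x\mapsto x^t$ on $[0,\infty)$ for $t\in(0,1]$ with Jensen's inequality for the concave logarithm and linearity of expectation. You apply Jensen first and subadditivity second, while the paper does the reverse (it derives $\left(\sum_k|A_k|\right)^t\le\sum_k|A_k|^t$ from the normalized inequality $\sum_i x_i^t\ge\sum_i x_i=1$, takes logarithms, and only then applies Jensen to $\E\left[\log\sum_k|A_k|^t\right]$); the ingredients and intermediate quantities are the same.

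Two of your side remarks deserve emphasis because they flag real features of the paper's formulation. First, your counterexample ($A_0\equiv A_1\equiv 1$, $t=2$, giving $\log 2$ on the left but $\tfrac12\log 2$ on the right) shows the inequality as stated is false for $t>1$; the paper's own proof invokes the elementary inequality only for $t\in(0,1)$, so the lemma must be read with the implicit restriction $t\in(0,1]$, which is harmless in applications since finiteness of a $t$-th moment with $t>1$ implies finiteness of all lower-order moments. Second, you are right that independence is never used: neither your argument nor the paper's requires it, and this is in fact essential, since the paper applies the lemma in the proof of Theorem \ref{thm2.1}, where the coefficients are explicitly allowed to be dependent. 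The independence hypothesis in the statement of Lemma \ref{lem6.2} is superfluous, and recognizing that is what makes the paper's own use of it legitimate.
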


\begin{proof}
We first observe an elementary inequality. If $x_i \ge 0,\ i=0,\ldots,n,$ and $\sum_{i=0}^n x_i = 1,$ then for any $t\in(0,1)$ we have that
$$ \sum_{i=0}^n (x_i)^t \ge \sum_{i=0}^n x_i = 1.$$
Applying this inequality with $x_i = |A_i|/\sum_{k=0}^n |A_k|,$ we obtain that
\begin{align*}
\left(\sum_{k=0}^n |A_k|\right)^t \le \sum_{k=0}^n |A_k|^t
\end{align*}
and
\begin{align} \label{6.3}
\E\[\log\sum_{k=0}^n |A_k|\] \le \frac{1}{t}\E\[\log\left(\sum_{k=0}^n |A_k|^t\right)\].
\end{align}
Jensen's inequality and linearity of expectation now give that
\begin{align*}
\E\[\log\sum_{k=0}^n |A_k|\] &\le \frac{1}{t}\log \E\[\sum_{k=0}^n |A_k|^t\] = \frac{1}{t}\log \left(\sum_{k=0}^n \E[|A_k|^t]\right).
\end{align*}
\end{proof}

\begin{proof}[Proof of Theorem \ref{thm2.1}]
Note that $m(Q_n) \le \log \|Q_n\|_{\infty}$  for all polynomials $Q_n$. Hence \eqref{6.1} and Jensen's inequality imply that
\begin{align*}
\E\left[\left| \tau_n\left(A_r(\alpha,\beta)\right) - \frac{\beta-\alpha}{2\pi}\right|\right] &\leq
\sqrt{\frac{2\pi}{{\bf k}}} \sqrt{\frac{1}{n}\, \E\left[ \log\frac{\|P_n\|_{\infty}}{\sqrt{|A_0 A_n|}} \right]} + \frac{2}{n(1-r)} \, \E\left[\log\frac{\|P_n\|_{\infty}}{\sqrt{|A_0 A_n|}}\right] \\ &\leq
C_r \sqrt{\frac{1}{n}\, \E\left[ \log\frac{\|P_n\|_{\infty}}{\sqrt{|A_0 A_n|}} \right]},
\end{align*}
where the last inequality holds for all sufficiently large $n\in\N.$ Since $\|P_n\|_{\infty} \le \sum_{k=0}^n |A_k|,$ we use the linearity of expectation and \eqref{6.2} to estimate
\begin{align*}
\E\left[ \log\frac{\|P_n\|_{\infty}}{\sqrt{|A_0 A_n|}} \right] &\le \E\left[\log \sum_{k=0}^n |A_k|\right] - \frac{1}{2}\E[\log|A_0A_n|] \\ &\le \frac{1}{t}\log \left(\sum_{k=0}^n \E[|A_k|^t]\right) - \frac{1}{2}\E[\log|A_0A_n|].
\end{align*}
The latter upper bound is finite by our assumptions.
\end{proof}

\begin{proof}[Proof of Corollary \ref{cor2.2}]
The result follows immediately upon using the uniform bounds $M$ and $L$ in estimate \eqref{2.1}.
\end{proof}

\begin{proof}[Proof of Proposition \ref{prop2.3}]
In was shown in \cite{PS} (see (5.3) in that paper) that
$$\tau_n(\C\setminus A_r(0,2\pi))\leq \frac{2}{n(1-r)}m\left(\frac{P_n}{\sqrt{|A_0 A_n|}}\right).$$
Since $m(Q_n) \le \log \|Q_n\|_{\infty}$  for all polynomials $Q_n$, it follows that
$$\tau_n(\C\setminus A_r(0,2\pi))\leq \frac{2}{n(1-r)}\log\left(\frac{\|P_n\|_{\infty}}{\sqrt{|A_0 A_n|}}\right).$$
Note that for $r=1/(\text{dist}(E,\T)+1)$, we have $E\subset \C\setminus A_r(0,2\pi)$. Estimating $\|P_n\|_{\infty}$ as in the proof of Theorem \ref{thm2.1}, we obtain that
\begin{align*}
\E[n\tau_n(E)] & \leq \frac{2}{1-r}\E\left[\log\left(\frac{\|P_n\|_{\infty}}{\sqrt{|A_0 A_n|}}\right)\right] \\
&\leq \frac{2}{1-r}\left(  \frac{1}{t}\log \left(\sum_{k=0}^n \E[|A_k|^t]\right) - \frac{1}{2}\E[\log|A_0A_n|]  \right) \\
&=\frac{d+1}{d}\left(  \frac{2}{t}\log \left(\sum_{k=0}^n \E[|A_k|^t]\right) -\E[\log|A_0A_n|]  \right).
\end{align*}
\end{proof}

\begin{proof}[Proof of Proposition \ref{prop2.4}]
The proof of this proposition proceeds in the same manner as the proof of Proposition 3.5 in \cite{PS} by using our Corollary \ref{cor2.2} along with Proposition \ref{prop2.3} .
\end{proof}

\begin{proof}[Proof of Proposition \ref{prop2.5}]
As in the previous proof, this result follows in direct parallel to the proof of Proposition 3.6 of \cite{PS} while taking into account our bound in Proposition \ref{prop2.4}.
\end{proof}

\subsection{Proofs for Section 3}

\begin{proof}[Proof of Theorem \ref{thm3.1}]
We proceed with an argument similar to the proof of Theorem \ref{thm2.1}. Note that the leading coefficient of $P_n$ is $A_nb_{n,n}$, and its constant term is $\sum_{k=0}^n A_k b_{0,k}$. Using the fact $m(Q_n) \le \log \|Q_n\|_{\infty}$  for all polynomials $Q_n$, we apply \eqref{6.1} and Jensen's inequality to obtain
\begin{align*}
\E\left[\left| \tau_n\left(A_r(\alpha,\beta)\right) - \frac{\beta-\alpha}{2\pi}\right|\right] &\leq
\sqrt{\frac{2\pi}{{\bf k}}} \sqrt{\frac{1}{n}\, \E\left[ \log\frac{\|P_n\|_{\infty}}{\sqrt{|D_n|}} \right]} + \frac{2}{n(1-r)} \, \E\left[\log\frac{\|P_n\|_{\infty}}{\sqrt{|D_n|}}\right] \\ &\leq C_r \sqrt{\frac{1}{n}\, \E\left[ \log\frac{\|P_n\|_{\infty}}{\sqrt{|D_n|}} \right]}
\end{align*}
for all sufficiently large $n\in\N.$ It is clear that
$$\|P_n\|_{\infty} \le \max_{0 \le k \le n} \|B_k\|_{\infty} \sum_{k=0}^n |A_k|.$$
Hence \eqref{6.1} yields
\begin{align*}
\E\left[ \log\frac{\|P_n\|_{\infty}}{\sqrt{|D_n|}} \right] &\le \E\left[\log \sum_{k=0}^n |A_k|\right] + \log \max_{0 \le k \le n} \|B_k\|_{\infty} - \frac{1}{2} \E[\log|D_n|] \\ &\le \frac{1}{t}\log \left(\sum_{k=0}^n \E[|A_k|^t]\right) + \log \max_{0 \le k \le n} \|B_k\|_{\infty} - \frac{1}{2} \E[\log|D_n|].
\end{align*}
Thus \eqref{3.2} follows as a combination of the above estimates.

We now proceed to the lower bound for the expectation of $\log|D_n|$ in \eqref{3.3} by estimating that
\begin{align*}
\E[\log|D_n|] &= \E\left[\log \left|A_n b_{n,n} \sum_{k=0}^n A_k b_{0,k}\right|\right]\\
&= \E[\log|A_n|] + \log|b_{n,n}| + \E\left[\log \left|\sum_{k=0}^n A_k b_{0,k}\right|\right]\\
&= \E[\log|A_n|] + \log|b_{n,n}| + \log|b_{0,0}| + \E\left[\log \left| A_0 + \sum_{k=1}^n A_k \frac{b_{0,k}}{b_{0,0}}\right|\right]\\
&\ge \log|b_{0,0}b_{n,n}| + \E[\log|A_n|] + L,
\end{align*}
where we used that $b_{0,0} \neq 0$ and $\E[\log|A_0+z|] \ge L$ for all $z\in\C.$
\end{proof}

\begin{proof}[Proof of Corollary \ref{cor3.2}]
We apply \eqref{3.2} with \eqref{3.3}. The uniform bounds on the expectations for the coefficients immediately give that
\begin{align*}
\frac{1}{tn}\log \left(\sum_{k=0}^n \E[|A_{k,n}|^t]\right) = O\left(\frac{\log n}{n}\right) \quad\mbox{and}\quad \frac{1}{2n} \E[\log|D_n|] \ge \frac{1}{n} \log|b_{n,n}| + O\left(\frac{1}{n}\right).
\end{align*}
The assumption $d\mu/d\theta>0$ for a.e. $\theta$ implies \eqref{3.1}, see Corollary 4.1.2 of \cite{ST}, which in turn gives that
$$\lim_{n\to\infty} \frac{1}{n} \log|b_{n,n}| = \lim_{n\to\infty} \frac{1}{n} \log \max_{0 \le k \le n} \|B_k\|_{\infty} = 0.$$
Hence \eqref{3.6} follows from \eqref{3.2}. Recall that the leading coefficient $b_{n,n}$ of the orthonormal polynomial $B_n$ gives the solution of the following extremal problem \cite{ST}:
$$ |b_{n,n}|^{-2} = \inf\left\{ \int |Q_n|^2\,d\mu \ : \ Q_n \mbox{ is a monic polynomial of degree }n \right\}.$$
Using $Q_n(z)=z^n,$ we obtain that
$$ |b_{n,n}| \ge \left(\mu(\T)\right)^{-1/2} \mbox{  and   } \frac{1}{n} \log|b_{n,n}| \ge - \frac{1}{2n}\log\mu(\T). $$
We now show that $ \log \|B_n\|_{\infty} = O(\log n)$ as $n\to\infty,$ provided $d\mu(\theta)=w(\theta)\,d\theta$ with $w(\theta)\ge c > 0,\ \theta\in[0,2\pi).$ Indeed, the Cauchy-Schwarz inequality gives for the orthonormal polynomial $B_n(z)=\sum_{k=0}^n b_{k,n} z^k$ that
\begin{align*}
\|B_n\|_{\infty} &\le \sum_{k=0}^n |b_{k,n}| \le \sqrt{n+1} \left(\sum_{k=0}^n |b_{k,n}|^2\right)^{1/2} = \sqrt{n+1} \left(\frac{1}{2\pi} \int_0^{2\pi} |B_n(e^{i\theta})|^2\,d\theta \right)^{1/2} \\ &\le \sqrt{\frac{n+1}{2\pi c}} \left(\int_0^{2\pi} |B_n(e^{i\theta})|^2 w(\theta)\,d\theta \right)^{1/2} = \sqrt{\frac{n+1}{2\pi c}}.
\end{align*}
This estimate completes the proof of \eqref{3.7}.
\end{proof}

\subsection{Proofs for Section 4}

\begin{proof}[Proof of Proposition \ref{prop4.1}]
Assume that the discrete random variable $|A_0|$ takes values $\{x_k\}_{k=1}^{\infty}$ that are arranged in the increasing order, and note that the range of values for $Y_n$ is the same. Let $a_k=\P(Y_n\leq x_k)$ and $b_k=\P(|A_0|\leq x_k)$, where $k\in\N$. It is clear that $\P(Y_n=x_k) = a_k-a_{k-1}$ and $\P(|A_0|=x_k)=b_k-b_{k-1},\ k\in\N.$ Since the $A_k$'s are independent and identically distributed, we have that
\begin{align*}
a_k &= \P(Y_n\leq x_k)= \P(|A_0|\leq x_k, |A_1|\leq x_k,\dots, |A_n|\leq x_k)\\
&=\P(|A_0|\leq x_k) \P( |A_1|\leq x_k) \cdots \P( |A_n|\leq x_k) = [\P(|A_0|\leq x_k)]^{n+1}=b_k^{n+1}
\end{align*}
holds for all $k\in\N.$ Thus
\begin{align*}
\E[Y_n^t]:=&\sum_{k=1}^{\infty} x_k^t \ \P(Y_n=x_k) =\sum_{k=1}^{\infty} x_k^t \ [a_k-a_{k-1}] \\
=&\sum_{k=1}^{\infty} x_k^t \ [b_k^{n+1}-b_{k-1}^{n+1}]
= \sum_{k=1}^{\infty} x_k^t \ [b_k -b_{k-1}][b_k^n +b_k^{n-1}b_{k-1} + \dots +b_{k-1}^n] \\
\leq & \sum_{k=1}^{\infty} x_k^t \  [b_k-b_{k-1}](n+1)b_k^n \leq (n+1) \sum_{k=1}^{\infty} x_k^t \ \P(|A_0| = x_k) \\
=&(n+1) \ \E[|A_0|^t].
\end{align*}
By Jensen's inequality and the previous estimate, we have
\begin{align*}
\E[\log Y_n]=\E\left[\frac{1}{t}\log Y_n^t\right]\leq &\frac{1}{t} \log \E [Y_n^t] \\
\leq & \frac{1}{t} ( \log( (n+1) \ \E[|A_0|^t]) \\
=& \frac{1}{t}(\log (n+1)+\log \mu).
\end{align*}

 We now show that this argument can be extended to arbitrary random variables $\{|C_k|\}_{k=0}^n$. Consider the increasing sequences of simple (discrete) random variables $\{|A_{k,i}|\}_{i=1}^{\infty}$ such that $\lim_{i\rightarrow \infty}|A_{k,i}|=|C_k|,\ k=0,\dots, n$. For $Y_{n,i}=\max_{0\leq k\leq n}|A_{k,i}|$ and $Z_n=\max_{0\leq k \leq n}|C_{k}|$, one can see that
$$\lim_{i\rightarrow \infty}Y_{n,i}^t=Z_n^t \ \ \ \text{and} \ \ \ \ \lim_{i\rightarrow \infty} |A_{0,i}|^t=|C_0|^t,$$
where $t>0$. Moreover, the sequence of simple random variables $Y_{n,i}^t$ is increasing to $Z_n^t$, so that the Monotone Convergence Theorem gives
$$\lim_{i\rightarrow \infty} \E[Y_{n,i}^t]=\E[Z_n^t].$$
Using the already proven result for discrete random variables and passing to the limit as $i\to\infty$, we obtain that
$$\E[Z_{n}^t]\leq (n+1)\E[|C_{0}|^t].$$
Hence Jensen's inequality yields
$$\E[\log Z_n]\leq \frac{1}{t}(\log (n+1)+\log \E[|C_0|^t]),$$
as before.
\end{proof}

\subsection{Proofs for Section 5}

The following lemma is due to Arnold and Groeneveld \cite{AG}, and is also found in \cite[p. 110]{DN}. We prove it in our setting for completeness.
\begin{lemma} Let $X_i,\ i=0,1,\dots, n,$ be possibly dependent random variables with $\E[X_i]=\mu_i$ and $\text{Var}[X_i]=\sigma_i^2$. Then for any real constants $c_i$, the ordered random variables $X_{0:n}\leq X_{1:n}\leq \cdots \leq X_{n:n}$ satisfy
$$\left|\E\left[ \sum_{i=0}^n c_i(X_{i:n}-\bar{\mu})\right]\right|\leq \left(\sum_{i=0}^n (c_i-\bar{c})^2\sum_{i=0}^n[(\mu_i-\bar{\mu})^2+\sigma_i^2]\right)^{1/2},$$
where $\bar{c}=n^{-1}\sum_{i=0}^n c_i$, $\bar{\mu}=n^{-1}\sum_{i=0}^n \mu_{i:n}=n^{-1}\sum_{i=0}^n \mu_i$, and $\mu_{i:n}=\E[X_{i:n}]$.
\end{lemma}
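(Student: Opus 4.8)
The plan is to reduce the stated bound to the Cauchy--Schwarz inequality after recording two structural facts about order statistics. The crucial observation is that, for every sample outcome, the vector $(X_{0:n},\dots,X_{n:n})$ is nothing but a reordering of $(X_0,\dots,X_n)$. Hence both $\sum_{i=0}^n X_{i:n}=\sum_{i=0}^n X_i$ and $\sum_{i=0}^n X_{i:n}^2=\sum_{i=0}^n X_i^2$ hold pointwise. Taking expectations yields $\sum_i \mu_{i:n}=\sum_i\mu_i$ (which justifies the identity stated for $\bar\mu$) together with the less transparent relation $\sum_i \E[X_{i:n}^2]=\sum_i \E[X_i^2]$; this second identity is what ultimately drives the whole argument.

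First I would center the weights. Since the equal-weight combination $\sum_i (X_{i:n}-\bar\mu)$ has expectation zero by the mean identity above, replacing $c_i$ with $c_i-\bar c$ changes nothing in expectation, so that
\begin{align*}
\E\left[\sum_{i=0}^n c_i(X_{i:n}-\bar\mu)\right]=\sum_{i=0}^n (c_i-\bar c)(\mu_{i:n}-\bar\mu).
\end{align*}
Applying the Cauchy--Schwarz inequality to this finite sum gives
\begin{align*}
\left|\E\left[\sum_{i=0}^n c_i(X_{i:n}-\bar\mu)\right]\right|\le \left(\sum_{i=0}^n (c_i-\bar c)^2\right)^{1/2}\left(\sum_{i=0}^n (\mu_{i:n}-\bar\mu)^2\right)^{1/2},
\end{align*}
so it remains only to bound the second factor by $\sum_{i=0}^n[(\mu_i-\bar\mu)^2+\sigma_i^2]$, which will match the right-hand side of the lemma.

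The heart of the proof is this last bound, and it is where I expect the only genuine subtlety. Expanding both $\sum_i(\mu_{i:n}-\bar\mu)^2$ and $\sum_i[(\mu_i-\bar\mu)^2+\sigma_i^2]$ about $\bar\mu$, one finds that both carry the same additive constant (the number of terms times $\bar\mu^2$), since $\sum_i\mu_{i:n}=\sum_i\mu_i$; the constant cancels and the desired inequality collapses to $\sum_i \mu_{i:n}^2\le \sum_i\E[X_i^2]$. To prove this I would apply Jensen's inequality to each order statistic separately, $\mu_{i:n}^2=(\E[X_{i:n}])^2\le \E[X_{i:n}^2]$, and then invoke the reordering identity $\sum_i\E[X_{i:n}^2]=\sum_i\E[X_i^2]$ from the first paragraph. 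The point to watch is precisely the interplay of these two facts: Jensen is used termwise on the ordered variables, while the summation invariance is what allows the upper bound to collapse back to the original second moments $\E[X_i^2]=\mu_i^2+\sigma_i^2$. It is exactly this combination that converts the termwise slack from Jensen into the variance contributions $\sigma_i^2$ appearing on the right, completing the estimate.
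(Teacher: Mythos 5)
Your proof is correct, but it takes a genuinely different route from the paper's, reversing the order of operations. The paper applies Cauchy--Schwarz \emph{pointwise} to the random vector $(X_{i:n}-\bar{\mu})_{i}$, then takes expectations, which forces it to use Jensen's inequality in the global form $\E[Z^{1/2}]\leq(\E[Z])^{1/2}$ to move the square root outside the expectation; the rearrangement identity $\sum_{i}\E[X_{i:n}^2]=\sum_{i}\E[X_i^2]$ then enters only implicitly, when $\E[\sum_i (X_{i:n}-\bar{\mu})^2]$ is evaluated in the last line. You instead take expectations \emph{first}, apply Cauchy--Schwarz to the deterministic vector $(\mu_{i:n}-\bar{\mu})_i$, and use Jensen termwise, $(\E[X_{i:n}])^2\leq \E[X_{i:n}^2]$, before invoking the same rearrangement identity, which you state explicitly. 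Your ordering buys a cleaner argument and in fact a sharper intermediate bound: your second factor $\sum_i(\mu_{i:n}-\bar{\mu})^2$ is smaller than the lemma's $\sum_i[(\mu_i-\bar{\mu})^2+\sigma_i^2]$, the gap being exactly $\sum_i \text{Var}[X_{i:n}]$. It also sidesteps a blemish in the paper's write-up: there the centering identity $\sum_i c_i(X_{i:n}-\bar{\mu})=\sum_i(c_i-\bar{c})(X_{i:n}-\bar{\mu})$ is asserted pointwise, yet the difference of the two sides is $\bar{c}\sum_i(X_{i:n}-\bar{\mu})$, a random variable that vanishes only after taking expectations --- which is exactly where you perform the centering. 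What the paper's ordering buys in exchange is a formally stronger conclusion, namely a bound on $\E\left[\left|\sum_i(c_i-\bar{c})(X_{i:n}-\bar{\mu})\right|\right]$ rather than merely on the absolute value of the expectation. (One shared caveat: both your argument and the paper's require $\bar{c}$ and $\bar{\mu}$ to be the arithmetic means over all $n+1$ indices, so the normalization $n^{-1}$ in the statement should be read as $(n+1)^{-1}$; this is an indexing typo inherited from \cite{AG} and affects the two proofs equally.)
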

\begin{proof}
We use the Cauchy-Schwartz inequality in the following estimate:
\begin{align*}
\left|\sum_{i=0}^n c_i(X_{i:n}-\bar{\mu})\right|=& \left|\sum_{i=0}^n (c_i-\bar{c})(X_{i:n}-\bar{\mu})\right| \\
\leq & \ \left[\sum_{i=0}^n (c_i-\bar{c})^2\sum_{i=0}^n(X_{i:n}-\bar{\mu})^2\right]^{1/2}. \\
\end{align*}
Observe that $|\E(Y)|\leq \E(|Y|)$ for any random variable $Y$, and that $\E(Z^{1/2})\leq [\E(Z)]^{1/2}$ for $Z\geq 0$ by Jensen's inequality.  Applying these facts while taking the expectation of the previous inequality gives
\begin{align*}
\left|\E\left[\sum_{i=0}^n c_i(X_{i:n}-\bar{\mu})\right]\right|\leq & \  \left[\sum_{i=0}^n (c_i-\bar{c})^2\right]^{1/2} \ \left[\E\left[\sum_{i=0}^n(X_{i:n}-\bar{\mu})^2\right]\right]^{1/2} \\
=&\left[\sum_{i=0}^n (c_i-\bar{c})^2\right]^{1/2} \ \left[\sum_{i=0}^n\E[X_{i:n}^2]-2\E[X_{i:n}]\bar{\mu}-\bar{\mu}^2)\right]^{1/2} \\
= & \ \left[\sum_{i=0}^n (c_i-\bar{c})^2\right]^{1/2} \ \left[\sum_{i=0}^n\sigma^2_i+(\mu_i-\bar{\mu})^2\right]^{1/2}.
\end{align*}
\end{proof}

\begin{proof}[Proof of Proposition \ref{prop5.1}]
To obtain bounds for $\E[Y_n]=\mu_{n:n}=\E[A_{n:n}]$, we apply the previous lemma while choosing $c_0=c_1= \dots= c_{n-1}=0$ and $c_n=1$. This yields
\begin{align*}
\E[A_{n:n}]-\bar{\mu}&\leq \left((n\bar{c}^2+ (1-\bar{c})^2) \sum_{i=0}^n(\mu_i^2-2\mu_i\bar{\mu}+\bar{\mu}^2+\sigma^2_i)\right)^{1/2}\\
&=\left(\left(\frac{n}{(n+1)^2}+ \left(1-\frac{1}{n+1}\right)^2\right) \sum_{i=0}^n(\mu_i^2-2\mu_i\bar{\mu}+\bar{\mu}^2+\sigma^2_i)\right)^{1/2}\\
&\leq \left(\sum_{i=0}^n (M^2+2M^2+M^2+S^2)\right)^{1/2} \\
&=(4M^2+S^2)^{1/2}(n+1)^{1/2}.
\end{align*}
It follows that
\begin{align*}
\E[Y_n]=\E[A_{n:n}]&\leq \bar{\mu} + (4M^2+S^2)^{1/2}(n+1)^{1/2} \\
&\leq M + (4M^2+S^2)^{1/2}(n+1)^{1/2}.
\end{align*}
\end{proof}

\begin{proof}[Proof of Theorem \ref{thm5.2}]
As in the proof of Theorem \ref{thm2.1}, we apply \eqref{6.1} and Jensen's inequality to obtain for all sufficiently large $n\in\N$ the following
\begin{align*}
\E\left[\left| \tau_n\left(A_r(\alpha,\beta)\right) - \frac{\beta-\alpha}{2\pi}\right|\right] &\leq C_r \sqrt{\frac{1}{n}\, \E\left[ \log\frac{\|P_n\|_{\infty}}{\sqrt{|A_0 A_n|}} \right]} \\
&=C_r \sqrt{\frac{ \E[\log \|P_n\|_{\infty}] -\frac{1}{2}\E[\log|A_0|]-\frac{1}{2}\E[\log |A_n|]}{n}}. \\
\end{align*}
Observe that
\begin{align*}\|P_n\|_{\infty} & =\sup_{\T}\left|\sum_{k=0}^n A_kz^k\right|
\leq \sum_{k=0}^n |A_k|\leq (n+1)\max_{0\leq k \leq n}|A_k|
=(n+1) Y_n.
\end{align*}
Taking the logarithm and then the expectation of the above yields
\begin{align*}
\E[\log \|P_n\|_{\infty}]&\leq \E[\log (n+1) + \log Y_n] \\
&=\log (n+1) + \E[\log Y_n] \\
&\leq \log (n+1) + \log\E[ Y_n], \\
\end{align*}
where the last inequality follows from Jensen's inequality.  As $n\rightarrow \infty$, applying proposition \ref{prop5.1} gives
\begin{align*}
\log (n+1) + \log\E[ Y_n]
&\leq \log(n+1) + \log O(\sqrt{n}) \\
&=\log(n+1) + \frac{1}{2}\log n +O(1) \\
&<\frac{3}{2} \log (n+1) + O(1).
\end{align*}
Combining these bounds gives the result of Theorem \ref{thm5.2}.
\end{proof}

\textbf{Acknowledgements.}  Research of I. E. Pritsker was partially supported by the National Security Agency (grant H98230-12-1-0227), and by the AT\&T Professorship. Research of A. M. Yeager was partially supported by the Vaughn Foundation and by the Jobe scholarship from the Department of Mathematics at Oklahoma State University, and it is a portion of his work towards a PhD degree.


\begin{thebibliography}{99}

\bibitem{AB}V. V. Andrievskii and H.-P. Blatt, Discrepancy of signed measures and polynomial approximation, Springer-Verlag, New York, 2002.

\bibitem{Arn}L. Arnold, \"Uber die Nullstellenverteilung zuf\"alliger Polynome, Math. Z. {\bf 92} (1966), 12--18.

\bibitem{AG} B.~Arnold and R.~Groeneveld, Bounds on the expectations of linear systematic statistics based on dependent samples, Ann. Statistics {\bf 7} (1979), 220--223.

\bibitem{BR}A. T. Bharucha-Reid and M. Sambandham, Random polynomials, Academic Press, Orlando, 1986.

\bibitem{Bl} T. Bloom, Random polynomials and (pluri)potential theory, Ann. Polon. Math. 91 (2007), 131--141.

\bibitem{DN}H. A. David and H. N. Nagaraja, Order statistics, John Wiley \& Sons, Hoboken, 2003.

\bibitem{ET}P. Erd\H{o}s and P. Tur\'an, On the distribution of roots of polynomials, Ann. Math. {\bf 51} (1950), 105--119.

\bibitem{Fa}K. Farahmand, Topics in random polynomials, Pitman Res. Notes Math. {\bf 393} (1998).

\bibitem{F}G. T. Fielding, The expected value of the integral around the unit circle of a certain class of polynomials, Bull. London Math. Soc. {\bf 2} (1970), 301--306.

\bibitem{Gan}T. Ganelius, Sequences of analytic functions and their zeros, Ark. Mat. {\bf 3} (1958), 1--50.

\bibitem{HN}C. P. Hughes and A. Nikeghbali, The zeros of random polynomials cluster uniformly near the the unit circle, Compositio Math. {\bf 144} (2008), 734--746.

\bibitem{IZ}I. Ibragimov and O. Zeitouni, On roots of random polynomials, Trans. Amer. Math. Soc. {\bf 349} (1997), 2427--2441.

\bibitem{IZa}I. Ibragimov and D. Zaporozhets, On distribution of zeros of random polynomials in complex plane, Prokhorov and contemporary probability theory, Springer Proc. Math. Stat. {\bf 33} (2013), 303--323.

\bibitem{KZ}Z. Kabluchko and D. Zaporozhets, Roots of random polynomials whose coefficients have logarithmic tails,  Ann. Probab. {\bf 41} (2013), no. 5, 3542--3581.

\bibitem{PS} I. E.~Pritsker and A. A.~Sola, Expected discrepancy for zeros of random algebraic polynomials,  Proc. Amer. Math. Soc. (to appear)

\bibitem{ST} H. Stahl and V. Totik, General orthogonal polynomials, Cambridge Univ. Press, New York, 1992.

\bibitem{SZ} B. Shiffman and S. Zelditch, Equilibrium distribution of zeros of random polynomials, Int. Math. Res. Not. {\bf 1} (2003), 25--49.

\end{thebibliography}
\end{document}